\newtheorem{theorem}{Theorem}
\newtheorem*{acknowledgement}{Acknowledgement}
\newtheorem{corollary}{Corollary}
\newtheorem{example}{Example}
\newtheorem{lemma}{Lemma}
\newtheorem{proposition}{Proposition}
\title{Beauville surfaces with abelian Beauville group}
\author{Gabino Gonz\'{a}lez-Diez \and Gareth A. Jones \and David Torres-Teigell}
\begin{document}

\maketitle

\begin{abstract}A Beauville surface is a rigid surface of general type arising as a quotient of a product of curves $C_{1}$, $C_{2}$ of genera $g_{1},g_{2}\ge 2$ by the free action of a finite group $G$. In this paper we study those Beauville surfaces for which $G$ is abelian (so that $G\cong \mathbb{Z}_{n}^{2}$ with $\gcd(n,6)=1$ by a result of Catanese). For each such $n$ we are able to describe all such surfaces, give a formula for the number of their isomorphism classes and identify their possible automorphism groups. This explicit description also allows us to observe that such surfaces are all defined over $\mathbb{Q}$.


\

\noindent{\it 2010 Mathematics Subject Classification:} 14J50 (primary); 14J25 \and 14J29 \and 20B25 (secondary)
\end{abstract}

\section{Introduction and statement of results}\label{sec:Introduction}

A complex surface $S$ is said to be \emph{isogenous to a product} if it is isomorphic to the quotient of a product of curves $C_{1}\times C_{2}$ of genus $g_{1}=g(C_{1}),g_{2}=g(C_{2})\ge 1$ by the free action of a finite subgroup $G$ of $\mathrm{Aut}(C_{1}\times C_{2})$, the automorphism group of $C_{1}\times C_{2}$.

If additionally the curves satisfy $g_{1},g_{2}\ge 2$ we say that $S$ is \emph{isogenous to a higher product}. We will always assume that $g_{1}\le g_{2}$.

It is not difficult to see that an element of $\mathrm{Aut}(C_{1}\times C_{2})$ either preserves each curve or interchanges them, the latter being possible only if $C_{1}\cong C_{2}$. Consequently one says that $S$ is of \emph{unmixed type} if no element of $G$ interchanges factors, and of \emph{mixed type} otherwise.

On the other hand if two elements $g,h\in G$ interchange factors it is clear that $gh$ belongs to the subgroup $G^{0}<G$ of factor-preserving elements, and therefore $[G:G^{0}]=2$.

There is always a minimal realization of $S$ in the sense that $G^{0}$ acts faithfully on each factor $C_{i}$. This is because if for instance $G^{0}$ did not act faithfully on $C_{1}$, so that there exists a subgroup $G'\lhd G$ acting trivially on $C_{1}$, then we could write
\[ S \cong \frac{C_{1}\times C_{2}}{G}=\frac{C_{1}/G' \times C_{2}/G'}{G/G'}=\frac{C_{1}\times (C_{2}/G')}{G/G'}\]Hence, from now on we will always assume that the realization $S\cong (C_{1}\times C_{2})/G$ is minimal.

\
%
%

A Beauville surfaces is a particular type of surface isogenous to a product. These surfaces were introduced by Catanese in~\cite{Cat} following a construction by Beauville in~\cite{Bea} (see Example~\ref{ej:Bea} below), and they have been subsequently studied by himself, Bauer and Grunewald (\cite{BaCa}, \cite{BaCaGr05}, \cite{BaCaGr07}), somewhat later by Fuertes, Gonz\'{a}lez-Diez and Jaikin (\cite{FuGo}, \cite{FuGoJa}), and more recently by Jones, Penegini, Garion, Larsen, Lubotzky, Guralnick, Malle, Fairbairn, Magaard and Parker (\cite{FuJo}, \cite{Jon}, \cite{GaLaLu}, \cite{GaPe}, \cite{GuMa}, \cite{FMP}).

A \emph{Beauville surface} is a compact complex surface $S$ isogenous to a higher product, $S \cong (C_{1}\times C_{2}) /G$, where $G^{0}$ acts on each of the curves in such a way that $C_{1}/G^{0}\cong\mathbb{P}^{1}$, $C_{2}/G^{0}\cong\mathbb{P}^{1}$, and the natural projections $C_{1}\longrightarrow\mathbb{P}^{1}$ and $C_{2}\longrightarrow\mathbb{P}^{1}$ ramify over three values.

Catanese proved that both the group $G$ and the curves $C_{1}$, $C_{2}$ --- hence also the genera $g_{1}$, $g_{2}$ --- are invariants of the Beauville surface. Consequently we will say that the surface $S$ has \emph{(Beauville) group} $G$ and \emph{covering product} $C_{1}\times C_{2}$.

Here we will consider only unmixed Beauville surfaces. In this case there exist isomorphisms $\varphi_{i}:G\longrightarrow H_{i}\le\mathrm{Aut}(C_{i})$ so that the action of an element $g\in G$ on a point $(p_{1},p_{2})\in C_{1}\times C_{2}$ is given by $g(p_{1},p_{2})=(\varphi_{1}(g)(p_{1}),\varphi_{2}(g)(p_{2}))$. From this point of view giving an unmixed Beauville surface amounts to specifying an isomorphism $\phi=\varphi_{2}\circ\varphi_{1}^{-1}$ between groups $H_{1}\le\mathrm{Aut}(C_{1})$ and $H_{2}\le\mathrm{Aut}(C_{2})$ satisfying the following two conditions
    \begin{enumerate}
        \item[(C1)] $C_{i}/H_{i}$ is an orbifold of genus zero with three branching values, and
        \item[(C2)] for any non-identity element $h_{1}\in H_{1}$ which fixes a point on $C_{1}$ the element $h_{2}=\phi(h_{1})\in H_{2}$ acts freely on $C_{2}$ (so that the action $h(p_{1},p_{2})=(h_{1}(p_{1}),h_{2}(p_{2}))$ is free on $C_{1}\times C_{2}$).
    \end{enumerate}

It turns out that the problem of deciding whether a finite group is an unmixed Beauville group is purely group theoretical \cite{Cat}. This occurs if and only if $G$ admits two triples of generators $(a_{1},b_{1},c_{1})$ and $(a_{2},b_{2},c_{2})$ such that
\begin{itemize}
    \item[(i)] $a_{1}b_{1}c_{1}=a_{2}b_{2}c_{2}=1$;
    \item[(ii)] $\dfrac{1}{\mathrm{ord}(a_{i})}+\dfrac{1}{\mathrm{ord}(b_{i})}+\dfrac{1}{\mathrm{ord}(c_{i})}<1$, $i=1,2$;
    \item[(iii)] $\Sigma(a_{1},b_{1},c_{1})\cap\Sigma(a_{2},b_{2},c_{2})=\{\mathrm{Id}_{G}\}$, where we define
        \[\Sigma(a,b,c)=\bigg\{\bigcup_{g\in G} \Big(g\langle{a}\rangle g^{-1} \cup g\langle{b}\rangle g^{-1} \cup g\langle{c}\rangle g^{-1} \Big)\bigg\}\]
\end{itemize}

This is a very useful characterization since it allows the use of computer programs such as GAP or MAGMA to solve the problem for groups of low order.

The first example of a Beauville surface was given by A.~Beauville some twenty years before the term was coined by Catanese. It appears as an exercise at the end of his book on complex surfaces \cite{Bea} as an example of a complex surface of general type whose two geometrical invariants $q$ and $p_{g}$ vanish.

\begin{example}[Beauville]\label{ej:Bea} Let us denote by $\mathbb{Z}_{n}=\mathbb{Z}/n\mathbb{Z}$ the group of integers modulo $n$ and by $F_{n}$ the Fermat curve of degree $n$,
    \[F_{n}=\{[x:y:z]\in\mathbb{P}^{2}(\mathbb{C})\ :\ x^{n}+y^{n}+z^{n}=0\}.\]For $n=5$ the group $G=\mathbb{Z}_{5}^{2}$ acts freely on $F_{5}\times F_{5}$ in the following way: for each $(\alpha,\beta)\in G$ we put
    \[(\alpha,\beta)\left(\left[\begin{tabular}{c}$x_{1}$\\$y_{1}$\\$z_{1}$\\
                 \end{tabular}\right]
     ,\left[\begin{tabular}{c}$x_{2}$\\$y_{2}$\\$z_{2}$\\
            \end{tabular}\right]\right)= \left(\left[\begin{tabular}{c}$\xi^{\alpha}x_{1}$\\$\xi^{\beta}y_{1}$\\$z_{1}$\\
                    \end{tabular}\right]
     ,\left[\begin{tabular}{c} $\xi^{\alpha+3\beta}x_{2}$\\$\xi^{2\alpha+4\beta}y_{2}$\\$z_{2}$\\
            \end{tabular}\right]\right) \]where $\xi=e^{2\pi i/5}$.
It is easy to see that the surface $S:=(F_{5}\times F_{5})/\mathbb{Z}_{5}^{2}$ is an unmixed Beauville surface.
\end{example}

Obviously the formula
    \[(\alpha,\beta)\left([x:y:z]\right)=[\xi^{\alpha}x:\xi^{\beta}y:z]\quad(\xi=e^{2\pi i/n})\]
    also defines an action of the group $\mathbb{Z}_{n}^{2}$ on $F_{n}$ when $n>5$. The quotient $F_{n}/\mathbb{Z}_{n}^{2}$ is always an orbifold of genus zero with three branching values, all of them of order $n$. Therefore any group automorphism $\phi:\mathbb{Z}_{n}^{2}\longrightarrow  \mathbb{Z}_{n}^{2}$ satisfying condition (C2) above gives rise to a Beauville surface, something that can occur only when $\gcd(n,6)=1$ (see~\cite{Cat}). Since any such $\phi$ is necessarily of the form
    \[\phi=\phi_{A}:
    \left(
        \begin{array}{c}
            \alpha \\
            \beta \\
        \end{array}
    \right)\longmapsto
    A\cdot\left(
        \begin{array}{c}
            \alpha \\
            \beta \\
        \end{array}
    \right)\quad\mbox{for some }\quad A=\left(\begin{array}{cc} a & b \\ c & d \end{array}\right)\in\mathrm{GL}_{2}(\mathbb{Z}_{n}),\]any surface obtained in this way is of the form $S_{A}^{n}=(F_{n}\times F_{n})/G_{A}$, where $G_{A}$ is the subgroup of $\mathrm{Aut}(F_{n}\times F_{n})$ defined by
    \[G_{A}=\left\{ \left( \left(\begin{array}{c} \alpha \\ \beta \\ \end{array}\right),A \left(\begin{array}{c} \alpha \\ \beta \\ \end{array}\right) \right)\ :\ \left(\begin{array}{c} \alpha \\ \beta \\ \end{array}\right)\in G \right\}\cong G\]
In this way the action of an element $(\alpha,\beta)\in\mathbb{Z}_{n}^{2}$ on $F_{n}\times F_{n}$ is given by
    \begin{equation*}(\alpha,\beta)
    \left(\left[\begin{tabular}{c}$x_{1}$\\$y_{1}$\\$z_{1}$\\
                 \end{tabular}\right]
     ,\left[\begin{tabular}{c}$x_{2}$\\$y_{2}$\\$z_{2}$\\
            \end{tabular}\right]\right) =
    \left(\left[\begin{tabular}{c}$\xi^{\alpha}x_{1}$\\$\xi^{\beta}y_{1}$\\$z_{1}$\\
                    \end{tabular}\right]
     ,\left[\begin{tabular}{c} $\xi^{a\alpha+b\beta}x_{2}$\\$\xi^{c\alpha+d\beta}y_{2}$\\$z_{2}$\\
            \end{tabular}\right]\right).\end{equation*}Thus the surface constructed in Example~\ref{ej:Bea} is $S_{A}^{5}$ for $A=\bigl( \begin{smallmatrix}1&3\\ 2&4\end{smallmatrix} \bigr)$.

It is known that, even though in the definition of a Beauville surface both curves $C_{1}$ and $C_{2}$ are only required to have genus greater than 1, in fact both genera have to be greater than 5 (see~\cite{FuGoJa}). Thus Beauville's example $S_{A}^{5}$ above reaches the minimum possible bigenus $(g_{1},g_{2})=(6,6)$.

In this paper we prove the following facts:
\begin{enumerate}
    \item Each Beauville surface with an abelian group $G$ is isomorphic to one of the form $S^{n}_{A}$ (Theorem~\ref{thm:abelian}), and is defined over $\mathbb Q$ (Corollary~\ref{cor:moduli}).
    \item The number $\Theta(n)$ of isomorphism classes of Beauville surfaces which have Beauville group $\mathbb{Z}_{n}^{2}$ (where $\gcd(n,6)=1$) is given by the formula in Theorem~\ref{thm:numbersurfaces}. One consequence is that asymptotically $\Theta(n)\sim n^{4}/72$ for prime powers $n$. Another is that $\Theta(5)=1$, which means that Beauville's original surface $S_{A}^{5}$ mentioned above is the only Beauville surface with group $\mathbb{Z}_{5}^{2}$.
    \item The automorphism group of $S_{A}^{n}$ is either $\mathbb{Z}_{n}^{2}$, or an extension of $\mathbb{Z}_{n}^{2}$ by one of the cyclic groups $\mathbb{Z}_{2}$, $\mathbb{Z}_{3}$ and $\mathbb{Z}_{6}$ or by the symmetric group $\mathcal{S}_{3}$ (Proposition~\ref{prop:automorphismgroup}).
\end{enumerate}

In~\cite{BaCaGr05} Bauer, Catanese and Grunewald have given a lower bound for the asymptotic behaviour of $\Theta(n)$ as $n\to\infty$. More recently, Garion and Penegini in~\cite{GaPe} have given upper and lower bounds for $\Theta(n)$ for each $n$, and in Theorem~2 we extend their results by giving an explicit formula for $\Theta(n)$.

\section{Unmixed Beauville structures on the product of Fermat curves}

It was proved in~\cite{Cat} that if $S=(C_{1}\times C_{2})/G$ is a Beauville surface with abelian group $G$ then $G=\mathbb{Z}^{2}_{n}$ with $\gcd(n,6)=1$, and in~\cite{FuGoJa} that in that case $C_{1} = C_{2} =F_{n}$, the Fermat curve of degree $n$. Combining these two facts one gets the following slightly more precise result:

\begin{theorem}\label{thm:abelian}Any Beauville surface with abelian Beauville group is isomorphic to one of the form $S^{n}_{A}=(F_{n}\times F_{n})/G_{A}$.
\end{theorem}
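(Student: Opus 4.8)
The plan is to assemble the theorem from the two cited inputs and then upgrade the curve identification to the explicit normal form $S^n_A$. The statement already tells us that the Beauville group of an abelian Beauville surface is $G=\mathbb{Z}_n^2$ with $\gcd(n,6)=1$ (by the result from \cite{Cat}), and that the two covering curves coincide with the Fermat curve $F_n$ (by \cite{FuGoJa}). So the only genuine content left to prove is that, once we know $C_1=C_2=F_n$ and $G=\mathbb{Z}_n^2$, the surface is isomorphic to one of the surfaces $S^n_A$ introduced above; in other words, that every admissible action of $\mathbb{Z}_n^2$ on $F_n\times F_n$ arises from a matrix $A\in\mathrm{GL}_2(\mathbb{Z}_n)$ in the form displayed in the Introduction.

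First I would recall that, in the minimal unmixed realization, the datum of the surface is equivalent to a pair of faithful actions $\varphi_i:G\to H_i\le\mathrm{Aut}(F_n)$ together with the isomorphism $\phi=\varphi_2\circ\varphi_1^{-1}$, subject to conditions (C1) and (C2). The key geometric fact to invoke is the structure of $\mathrm{Aut}(F_n)$ and, more specifically, a classification of which abelian subgroups $H\cong\mathbb{Z}_n^2$ of $\mathrm{Aut}(F_n)$ satisfy condition (C1), namely that $F_n/H$ is a genus-zero orbifold with exactly three branch points. For $\gcd(n,6)=1$ the relevant subgroup is essentially unique up to automorphisms of $F_n$: the standard diagonal action $(\alpha,\beta):[x:y:z]\mapsto[\xi^\alpha x:\xi^\beta y:z]$ computed in the Introduction, whose quotient is precisely $\mathbb{P}^1$ branched over three points of order $n$. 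Thus I would argue that after composing $\varphi_1$ with a suitable automorphism of $F_n$ we may assume $H_1=\varphi_1(G)$ is this standard group, and identify $\varphi_1$ with the identity parametrization $(\alpha,\beta)$.

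Next, applying the same normalization on the second factor, $\varphi_2(G)=H_2$ is again conjugate to the standard group; composing with an automorphism of the second copy of $F_n$ (which induces an isomorphism of surfaces) I may assume $H_2$ is the standard group as well. Under these identifications the isomorphism $\phi:H_1\to H_2$ becomes a group automorphism of $\mathbb{Z}_n^2$, and every such automorphism is left multiplication by a matrix $A\in\mathrm{GL}_2(\mathbb{Z}_n)$. Writing the action of $(\alpha,\beta)$ on the second factor as $[x:y:z]\mapsto[\xi^{a\alpha+b\beta}x:\xi^{c\alpha+d\beta}y:z]$ with $A=\bigl(\begin{smallmatrix}a&b\\ c&d\end{smallmatrix}\bigr)$ reproduces exactly the defining formula for $G_A$, so the surface is isomorphic to $S^n_A$, as claimed. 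Finally, condition (C2) translates into an open condition on $A$ that guarantees freeness of the action, but this is automatically satisfied because we started from an honest Beauville surface.

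I expect the main obstacle to be the normalization step: justifying rigorously that any faithful $\mathbb{Z}_n^2$-action on $F_n$ whose quotient is a three-branched genus-zero orbifold is conjugate \emph{within} $\mathrm{Aut}(F_n)$ to the standard diagonal action. This requires knowing $\mathrm{Aut}(F_n)$ well enough to rule out exotic embeddings of $\mathbb{Z}_n^2$ — using that $\gcd(n,6)=1$ forces $\mathrm{Aut}(F_n)$ to be the expected extension of $\mathbb{Z}_n^2$ by $\mathcal{S}_3$ — and then checking the branching data to pin down the conjugacy class. Once that uniqueness is in hand, the remaining passage to the matrix $A$ and the verification that the two normalizations can be performed simultaneously (so that they induce a genuine isomorphism of surfaces, not merely of the abstract data) is essentially bookkeeping with the explicit formulas already displayed in the Introduction.
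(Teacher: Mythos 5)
Your proposal is correct and follows essentially the same route as the paper: quote \cite{Cat} and \cite{FuGoJa} to reduce to $G=\mathbb{Z}_n^2$ acting on $F_n\times F_n$, observe that $\mathrm{Aut}(F_n)\cong\mathbb{Z}_n^2\rtimes\mathcal{S}_3$ has a unique subgroup isomorphic to $\mathbb{Z}_n^2$ satisfying (C1) (the paper uses literal uniqueness of this normal subgroup, which slightly streamlines your normalization step), and conclude that the surface is determined by an automorphism of $\mathbb{Z}_n^2$, i.e.\ a matrix $A\in\mathrm{GL}_2(\mathbb{Z}_n)$.
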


\begin{proof} In view of the results in~\cite{Cat} and~\cite{FuGoJa} mentioned above it is sufficient to observe that $\mathrm{Aut}(F_{n})$ possesses a unique subgroup isomorphic to $Z_{n}^{2}$ satisfying condition (C1) in our definition of Beauville surface, and therefore any Beauville surface with group $\mathbb{Z}_{n}^{2}$ is determined by an isomorphism $A: \mathbb{Z}_{n}^{2}\longrightarrow\mathbb{Z}_{n}^{2}$ satisfying condition (C2).
\end{proof}

This explicit  description of Beauville surfaces with abelian group yields the following:

\begin{corollary}\label{cor:moduli}
Each Beauville surface with abelian group is defined over $\mathbb Q$.
\end{corollary}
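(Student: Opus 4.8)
The plan is to use the explicit model furnished by Theorem~\ref{thm:abelian}: by that result it suffices to prove that each surface $S_A^n=(F_n\times F_n)/G_A$ admits a model over $\mathbb{Q}$. The Fermat curve $F_n$, being cut out by the equation $x^n+y^n+z^n=0$ with rational coefficients, is already defined over $\mathbb{Q}$, and hence so is the product $F_n\times F_n$. The only arithmetic input in the construction of $S_A^n$ is the root of unity $\xi=e^{2\pi i/n}$ appearing in the action of $G_A$, so a priori the quotient is defined over the cyclotomic field $\mathbb{Q}(\xi)$. I would therefore aim to descend $S_A^n$ from $\mathbb{Q}(\xi)$ to $\mathbb{Q}$ by exhibiting a Galois descent datum, the key point being that the \emph{subgroup} $G_A\le\mathrm{Aut}(F_n\times F_n)$ is stable under the Galois action even though its individual elements are not defined over $\mathbb{Q}$.

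First I would make the Galois action explicit. Fix $\sigma\in\mathrm{Gal}(\mathbb{Q}(\xi)/\mathbb{Q})$, so that $\sigma(\xi)=\xi^k$ for a unique $k\in(\mathbb{Z}/n\mathbb{Z})^{\times}$. Writing $g_{(\alpha,\beta)}\in G_A$ for the automorphism attached to $(\alpha,\beta)\in\mathbb{Z}_n^2$, whose effect on $F_n\times F_n$ is to scale the two factors by $(\xi^{\alpha},\xi^{\beta})$ and $(\xi^{a\alpha+b\beta},\xi^{c\alpha+d\beta})$ respectively (where $A=\bigl(\begin{smallmatrix}a&b\\c&d\end{smallmatrix}\bigr)$), I would compute the conjugate of $g_{(\alpha,\beta)}$ by the semilinear automorphism $f_\sigma$ induced by $\sigma$ on $F_n\times F_n$. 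Applying $\sigma$ to the scaling factors replaces each exponent $e$ by $ke$, so the conjugate scales the factors by $(\xi^{k\alpha},\xi^{k\beta})$ and $(\xi^{k(a\alpha+b\beta)},\xi^{k(c\alpha+d\beta)})$; since $k(a\alpha+b\beta)=a(k\alpha)+b(k\beta)$ and likewise for the other exponent, this is exactly $g_{(k\alpha,k\beta)}$. As $(\alpha,\beta)\mapsto(k\alpha,k\beta)$ is a bijection of $\mathbb{Z}_n^2$, I conclude that $f_\sigma G_A f_\sigma^{-1}=G_A$, that is, $G_A$ is normalised by the Galois action.

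With this in hand I would conclude by descent. Because $f_\sigma$ normalises $G_A$, it carries $G_A$-orbits to $G_A$-orbits and hence descends to a semilinear automorphism $\bar f_\sigma$ of the quotient $S_A^n$; the assignment $\sigma\mapsto\bar f_\sigma$ is readily checked to satisfy the cocycle condition, giving an effective descent datum for the finite Galois extension $\mathbb{Q}(\xi)/\mathbb{Q}$ on the smooth projective surface $S_A^n$ (smoothness and projectivity coming from (C2), which guarantees that $G_A$ acts freely). Galois descent for quasi-projective varieties then yields a model of $S_A^n$ over $\mathbb{Q}$, as required. The step I expect to require the most care is this last one: one must be sure that it is stability of the whole subgroup $G_A$, rather than pointwise rationality of its elements, that makes the $G_A$-action descend to a $\mathbb{Q}$-structure on the quotient. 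Indeed, were $G_A$ merely Galois-conjugate to a different $G_{A'}$ one would instead have to exhibit an explicit isomorphism $S_A^n\cong S_{A'}^n$, so the whole content of the argument is concentrated in the equality $f_\sigma G_A f_\sigma^{-1}=G_A$ established above.
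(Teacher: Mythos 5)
Your proposal is correct and follows essentially the same route as the paper: both arguments reduce to the explicit model $S_A^n=(F_n\times F_n)/G_A$ via Theorem~\ref{thm:abelian} and then check that a Galois element $\sigma$ with $\sigma(\xi)=\xi^k$ conjugates the automorphism attached to $(\alpha,\beta)$ into the one attached to $(k\alpha,k\beta)$, so that $G_A$ is setwise Galois-invariant and the quotient descends to $\mathbb{Q}$. You merely spell out the final descent step (cocycle condition, effectivity for the free quotient) that the paper leaves implicit.
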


\begin{proof}
Since the curves $F_n$ are obviously defined over $\mathbb Q$ we only have to show  that so is the group $G_A$. That is, we have to show that the obvious action of the absolute Galois group ${\rm Gal}(\overline{\mathbb Q}/{\mathbb Q})$ on automorphisms of $F_n\times F_n$ leaves the group $G_A$ setwise invariant.

Now, let $\sigma$ be an arbitrary element of ${\rm Gal}(\overline{\mathbb Q}/{\mathbb Q})$, and suppose that $\sigma(\xi)= \xi^ j$;  then, clearly, $\sigma$ transforms the action of an element $(\alpha, \beta)$ of $G_A$ into the action of another element of $G_A$, namely $(j\alpha,j\beta)$.
\end{proof}

\noindent{\bf Remark~1.} In regard to the fields of definition of quotient varieties by abelian groups we draw the reader's attention to the interesting  Corollary~1.9 in the article~\cite{BaCa08} by Bauer and Catanese.

\

We would like to find which matrices $A=\bigl( \begin{smallmatrix}a&b\\ c&d\end{smallmatrix} \bigr) \in \mathrm{GL}_{2}(\mathbb{Z}_{n})$ define Beauville surfaces $S_{A}^{n}$, that is, we would like to characterize those matrices $A$ such that the group $G_{A}$ defined above acts freely on $F_{n}\times F_{n}$. Note that the Beauville surface $S_{A}^{n}$ corresponds, in terms of triples of generators, to a pair of ordered triples satisfying conditions (i) to (iii) where, without loss, we can take the first triple to be the standard triple $((1,0), (0,1), (-1,-1))$. Then the second triple is $((a,c),(b,d),(-a-b,-c-d))$, obtained from the first one by the automorphism represented by the matrix $A\in\mathrm{GL}_2(\mathbb{Z}_n)$.

With the previous notation, the elements $(\alpha,\beta)$ fixing points in the first component are precisely those of the form $(k,0)$, $(0,k)$ and $(k,k)$. Equivalently the elements $(\alpha,\beta)$ that fix points in the second curve are those such that $a\alpha+b\beta=0$, $c\alpha+d\beta=0$ or $a\alpha+b\beta=c\alpha+d\beta$. We obtain the following:

\begin{lemma}\label{lem:conditions}Let $A=\bigl( \begin{smallmatrix}a&b\\ c&d\end{smallmatrix} \bigr) \in \mathrm{GL}_{2}(\mathbb{Z}_{n})$. The group $G_{A}$ defined above acts freely on the product $F_{n}\times F_{n}$ if and only if the following conditions hold:
    \begin{equation}\label{eq:freeactions}a\,,\,b\,,\,c\,,\,d\,,\,a+b\,,\,c+d\,,\,a-c\,,\,b-d\,,\,a+b-c-d\in U(\mathbb{Z}_{n})\end{equation}where $U(\mathbb{Z}_{n})$ is the group of units of $\mathbb{Z}_{n}$.
\end{lemma}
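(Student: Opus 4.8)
The plan is to use the criterion that $G_A$ acts freely on $F_n\times F_n$ precisely when no non-identity element $(\alpha,\beta)\in\mathbb{Z}_n^2$ simultaneously fixes a point on both factors. The discussion preceding the statement already supplies both pieces of local information I need: a non-identity element fixes a point on the first Fermat curve exactly when it lies in one of the three families $(k,0)$, $(0,k)$, $(k,k)$ with $k\neq 0$; and $(\alpha,\beta)$ fixes a point on the second curve exactly when one of $a\alpha+b\beta=0$, $c\alpha+d\beta=0$, $a\alpha+b\beta=c\alpha+d\beta$ holds. Hence freeness of $G_A$ is equivalent to the requirement that every element of the three families above act \emph{without} fixed points on the second factor.

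First I would substitute the three families into the second-curve fixed-point conditions. For $(k,0)$ the three conditions become $ak=0$, $ck=0$ and $(a-c)k=0$; for $(0,k)$ they become $bk=0$, $dk=0$ and $(b-d)k=0$; and for $(k,k)$ they become $(a+b)k=0$, $(c+d)k=0$ and $(a+b-c-d)k=0$. Freeness then demands that none of these nine equations admit a solution with $k\neq 0$.

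The single elementary fact driving the whole argument is that for $m\in\mathbb{Z}_n$ the equation $mk=0$ has a nonzero solution $k$ if and only if $\gcd(m,n)>1$, that is, if and only if $m\notin U(\mathbb{Z}_n)$; equivalently, $m$ annihilates no nonzero element of $\mathbb{Z}_n$ exactly when $m$ is a unit. Applying this to each of the nine equations converts ``no nonzero solution'' into unit membership of the corresponding coefficient, yielding $a,c,a-c$ from the first family, $b,d,b-d$ from the second, and $a+b,c+d,a+b-c-d$ from the third. Collecting these gives precisely the conditions~\eqref{eq:freeactions}, and since every step is an equivalence the converse direction follows at once.

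I expect no serious obstacle: once the two fixed-point descriptions are in hand the content is entirely bookkeeping. The only points needing care are the clean statement and verification of the annihilator fact above, and treating the three families symmetrically so that exactly the nine listed coefficients appear, with none omitted or duplicated.
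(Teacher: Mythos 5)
Your proposal is correct and follows essentially the same route as the paper's proof: substitute the three families $(k,0)$, $(0,k)$, $(k,k)$ of first-factor point-stabilisers into the second-factor fixed-point conditions and observe that $mk=0$ has no nonzero solution exactly when $m\in U(\mathbb{Z}_n)$. The paper works out the $(k,0)$ case and says the other two are analogous; you simply make all nine conditions explicit.
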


\begin{proof}Let $(\alpha,\beta)\in\mathbb{Z}_{n}$ be an element fixing some point of the first curve.

If $(\alpha,\beta)=(k,0)$ the action on the second curve is given by $(ak,ck)$, which fixes no point if and only if $ak\neq 0$, $ck\neq 0$, $ak\neq ck$. This will be true for all $k$ if and only if $a,c,a-c\in U(\mathbb{Z}_{n})$.

Arguing the same way with elements of the form $(\alpha,\beta)=(0,k)$ and $(\alpha,\beta)=(k,k)$ one obtains the result.
\end{proof}

Note that from this lemma we can deduce the already mentioned fact, due to Catanese, that $\gcd(n,6)$ must be 1 for $\mathbb{Z}_{n}^{2}$ to admit a Beauville structure.

If $n$ is even, the conditions $a, b, a+b\in U(\mathbb{Z}_{n})$ cannot hold simultaneously. On the other hand if $n$ is a multiple of 3 then necessarily $a\equiv b\bmod{3}$ and $c\equiv d\bmod{3}$, but then the matrix $A$ is not invertible.

We will denote by $\mathfrak{F}_{n}$ the set of matrices in $\mathrm{GL}_{2}(\mathbb{Z}_{n})$ satisfying the conditions in \eqref{eq:freeactions}, and will write $e^{A}_{(\alpha,\beta)}$ for the element of $G_{A}$ corresponding to $(\alpha,\beta)\in\mathbb{Z}_{n}^{2}$. In $\mathbb{Z}_{5}^{2}$, for instance, there are 24 different matrices satisfying these conditions.

It follows from Theorem~\ref{thm:abelian} that $\mathrm{Aut}(F_{n}\times F_{n})$ acts on the set of groups $\{G_{A}\ :\ A\in\mathfrak{F}_{n}\}$ by conjugation. Furthermore, two such groups are in the same orbit if and only if the corresponding Beauville surfaces are isomorphic. This is because any isomorphism $\phi:S_{A}^{n}\longrightarrow S_{A'}^{n}$ lifts to an automorphism $\Phi$ of the product of curves, yielding a commutative diagram
    \[\begin{tabular}{rcccl}
        & $F_{n}\times F_{n}$ & $\xrightarrow{\quad\Phi\quad}$ & $F_{n}\times F_{n}$ & \\
        &\rotatebox{90}{$\longleftarrow$} & & \rotatebox{90}{$\longleftarrow$}& \\
        $S=$ & $\dfrac{F_{n}\times F_{n}}{G_{A}}$ & $\xrightarrow{\quad\phi\quad}$ & $\dfrac{F_{n}\times F_{n}}{G_{A'}}$& $=S'$ \\
    \end{tabular}\]such that $\Phi G_{A}\Phi^{-1}=G_{A'}$ (see \cite{Cat}).

It is well known that the automorphism group of $F_{n}$ is $\mathbb{Z}_{n}^{2}\rtimes \mathcal{S}_{3}$ and so $\mathrm{Aut}(F_{n}\times F_{n})=\langle\ \mathbb{Z}_{n}^{2}\times \mathbb{Z}_{n}^{2},\ \mathcal{S}_{3}\times\mathcal{S}_{3},\ J \ \rangle$, where $J(\mu_{1},\mu_{2})=(\mu_{2},\mu_{1})$ and the groups $\mathbb{Z}_{n}^{2}\times \mathbb{Z}_{n}^{2}$ and $\mathcal{S}_{3}\times\mathcal{S}_{3}$ act separately on each factor, the first one by multiplying the homogeneous coordinates by $n$-th roots of unity and the second one by permuting them. We now note that every element of $\mathbb{Z}_{n}^{2}\times \mathbb{Z}_{n}^{2}$ fixes each element $e_{(\alpha,\beta)}^{A}\in G_{A}$ by conjugation, so the action of this subgroup on the set $\{G_{A}\}$ is trivial. We can therefore restrict our attention to the quotient group $W = \mathrm{Aut}(F_{n}\times F_{n})/(\mathbb{Z}_{n}^{2}\times \mathbb{Z}_{n}^{2})$, which is a semidirect product $(\mathcal{S}_{3}\times\mathcal{S}_{3})\rtimes\langle J\rangle$ of $\mathcal{S}_{3}\times\mathcal{S}_{3}$ by $\langle J\rangle\cong\mathcal{S}_2$, with the complement $\mathcal{S}_2$ transposing the direct two factors $\mathcal{S}_3$ by conjugation. Thus $W$ is the wreath  product $\mathcal{S}_3\wr\mathcal{S}_2$ of $\mathcal{S}_3$ by $\mathcal{S}_2$.

The group $\mathcal{S}_{3}$ can be viewed as a subgroup of $\mathrm{GL}_{2}(\mathbb{Z}_{n})$ via the group monomorphism:
    \[\begin{tabular}{cccc}
      $M:$ & $\mathcal{S}_{3}$ & $\longrightarrow$ & $\mathrm{GL}_{2}(\mathbb{Z}_{n})$ \\
         & $\tau$ & $\longmapsto$ & $M_{\tau}$
    \end{tabular}\]determined by $M_{\sigma_{1}}=\bigl( \begin{smallmatrix}-1&1\\ -1&0\end{smallmatrix} \bigr)$ for $\sigma_{1}=(1,3,2)$ and $M_{\sigma_{2}}=\bigl( \begin{smallmatrix}0&1\\ 1&0\end{smallmatrix} \bigr)$ for $\sigma_{2}=(1,2)$.

\begin{lemma}Let $\Phi=(\tau_{1},\tau_{2})\in\mathcal{S}_{3}\times\mathcal{S}_{3}$ be a factor-preserving automorphism. Then $\Phi G_{A}\Phi^{-1}=G_{A'}$, where $A'=M_{\tau_{2}}AM_{\tau_{1}}^{-1}$.

On the other hand, if $\Phi'=\Phi\circ J$ is an automorphism interchanging factors, then $\Phi' G_{A}\Phi'^{-1}=G_{A'}$, where $A'=M_{\tau_{2}}A^{-1}M_{\tau_{1}}^{-1}$.
\end{lemma}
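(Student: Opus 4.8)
The plan is to reduce both assertions to a single fact: the matrix $M_\tau$ is nothing other than the matrix of the conjugation action of $\tau\in\mathcal{S}_3\le\mathrm{Aut}(F_n)$ on the normal translation subgroup $\mathbb{Z}_n^2$. Writing $t_{(\alpha,\beta)}$ for the automorphism $[x:y:z]\mapsto[\xi^\alpha x:\xi^\beta y:z]$ of $F_n$, the statement I would prove first is that
\[\tau\,t_{(\alpha,\beta)}\,\tau^{-1}=t_{M_\tau(\alpha,\beta)}\qquad(\tau\in\mathcal{S}_3,\ (\alpha,\beta)\in\mathbb{Z}_n^2).\]
To establish this I would work in the symmetric coordinates $\mathbb{Z}_n^3/\langle(1,1,1)\rangle$, in which $(\alpha,\beta)$ corresponds to $(\alpha,\beta,0)$ and $\mathcal{S}_3$ acts simply by permuting the three homogeneous coordinates, hence by permuting the three exponents of the diagonal automorphism. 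A direct computation on the two generators then shows that conjugation by $\sigma_1=(1,3,2)$ carries $(\alpha,\beta,0)$ to $(\beta,0,\alpha)\equiv(\beta-\alpha,-\alpha,0)$ and conjugation by $\sigma_2=(1,2)$ carries it to $(\beta,\alpha,0)$; these are exactly $M_{\sigma_1}(\alpha,\beta)$ and $M_{\sigma_2}(\alpha,\beta)$. Since $\sigma_1,\sigma_2$ generate $\mathcal{S}_3$ and both $\tau\mapsto(\text{conjugation by }\tau)$ and $\tau\mapsto M_\tau$ are homomorphisms to $\mathrm{GL}_2(\mathbb{Z}_n)$, agreement on generators yields the displayed identity for every $\tau$.

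Granting this, the factor-preserving case is a direct conjugation. The element $e^A_{(\alpha,\beta)}\in G_A$ acts on $F_n\times F_n$ by $(t_{(\alpha,\beta)},t_{A(\alpha,\beta)})$, and $\Phi=(\tau_1,\tau_2)$ acts coordinatewise. Conjugating, the key identity turns the two factors into $t_{M_{\tau_1}(\alpha,\beta)}$ and $t_{M_{\tau_2}A(\alpha,\beta)}$ respectively. Thus $\Phi e^A_{(\alpha,\beta)}\Phi^{-1}$ is the element of $G_{A'}$ whose first coordinate has parameter $(\alpha',\beta')=M_{\tau_1}(\alpha,\beta)$, provided its second coordinate parameter $M_{\tau_2}A(\alpha,\beta)$ equals $A'(\alpha',\beta')=A'M_{\tau_1}(\alpha,\beta)$. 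Since $(\alpha,\beta)$ is arbitrary this forces $A'M_{\tau_1}=M_{\tau_2}A$, i.e. $A'=M_{\tau_2}AM_{\tau_1}^{-1}$, as claimed; and as $(\alpha,\beta)$ ranges over $\mathbb{Z}_n^2$ so does $(\alpha',\beta')$, so the conjugate is all of $G_{A'}$.

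For the mixed case I would write $\Phi'=\Phi\circ J$ with $J(\mu_1,\mu_2)=(\mu_2,\mu_1)$, so that $\Phi'(\mu_1,\mu_2)=(\tau_1\mu_2,\tau_2\mu_1)$ and $\Phi'^{-1}(\mu_1,\mu_2)=(\tau_2^{-1}\mu_2,\tau_1^{-1}\mu_1)$. Tracking a point through $\Phi'^{-1}$, then $e^A_{(\alpha,\beta)}$, then $\Phi'$, the swap exchanges the roles of the two factors, so that the first coordinate of $\Phi'e^A_{(\alpha,\beta)}\Phi'^{-1}$ now picks up $\tau_1 t_{A(\alpha,\beta)}\tau_1^{-1}=t_{M_{\tau_1}A(\alpha,\beta)}$ while the second picks up $\tau_2 t_{(\alpha,\beta)}\tau_2^{-1}=t_{M_{\tau_2}(\alpha,\beta)}$. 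Reading off $(\alpha',\beta')=M_{\tau_1}A(\alpha,\beta)$ and imposing $A'(\alpha',\beta')=M_{\tau_2}(\alpha,\beta)$ gives $A'M_{\tau_1}A=M_{\tau_2}$, that is $A'=M_{\tau_2}A^{-1}M_{\tau_1}^{-1}$.

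The only real content lies in the first paragraph: identifying the abstractly defined monomorphism $M$ with the genuine conjugation representation of $\mathcal{S}_3$ on the translation lattice, which is where the specific shapes of $M_{\sigma_1}$ and $M_{\sigma_2}$ get pinned down and where one must be careful about the permutation convention and the projective normalization $(1,1,1)\equiv 0$. Once that is secured, both halves of the lemma are formal conjugation computations; the only point demanding attention in the mixed case is the order in which $J$ composes with the two permutations, which is precisely what converts $A$ into $A^{-1}$.
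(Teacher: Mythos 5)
Your proof is correct, and it reorganizes the argument in a way that differs from the paper's. The paper works entirely at the level of the product: it decomposes $(\tau_1,\tau_2)$ as $(\tau_1,\mathrm{Id})\circ(\mathrm{Id},\tau_2)$ and carries out five separate explicit computations on points of $F_n\times F_n$ — one for each of $(\mathrm{Id},\sigma_1)$, $(\mathrm{Id},\sigma_2)$, $(\sigma_1,\mathrm{Id})$, $(\sigma_2,\mathrm{Id})$ and $J$ — reading off in each case whether the effect on $A$ is left multiplication by $M_{\tau_2}$, right multiplication by $M_{\tau_1}^{-1}$, or inversion. You instead isolate the single one-curve identity $\tau\,t_{(\alpha,\beta)}\,\tau^{-1}=t_{M_{\tau}(\alpha,\beta)}$, verify it on the two generators of $\mathcal{S}_3$ in symmetric coordinates $\mathbb{Z}_n^3/\langle(1,1,1)\rangle$ (your computations $(\alpha,\beta,0)\mapsto(\beta,0,\alpha)\equiv(\beta-\alpha,-\alpha,0)$ and $(\alpha,\beta,0)\mapsto(\beta,\alpha,0)$ reproduce exactly $M_{\sigma_1}$ and $M_{\sigma_2}$, consistent with the paper's formulas), and then deduce both halves of the lemma by a uniform formal conjugation for arbitrary $(\tau_1,\tau_2)$. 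The computational core — checking on the generators $\sigma_1,\sigma_2$ — is the same, but your factorization buys two things: the general case needs no composition of five special cases, and the mechanism producing $A^{-1}$ in the mixed case (the swap $J$ exchanging which factor carries the parameter $(\alpha,\beta)$ and which carries $A(\alpha,\beta)$) becomes transparent rather than emerging from a sixth explicit computation. The price is that you must be explicit about the permutation convention and the normalization modulo $(1,1,1)$, which you correctly flag as the only delicate point.
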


\begin{proof}First we note that if $A\in\mathfrak{F}_{n}$, then $A'=M_{\tau_{2}}A^{\pm 1}M_{\tau_{1}}^{-1}$ belongs to $\mathfrak{F}_{n}$ as well. We will now see how the group $\mathrm{Aut}(F_{n}\times F_{n})$ acts by conjugation on the elements of $G_{A}$.

To see how an element $(\tau_{1},\tau_{2})\in\mathcal{S}_{3}\times\mathcal{S}_{3}$ acts on $G_{A}$ we will write $(\tau_{1},\tau_{2})=(\tau_{1},\mathrm{Id})\circ(\mathrm{Id},\tau_{2})$, with $\tau_{1},\tau_{2}\in\mathcal{S}_{3}$. If $\Phi=(\mathrm{Id},\sigma_{1})$ then
    \[\Phi e^{A}_{(\alpha,\beta)}\Phi^{-1}
    \left(\left[\begin{tabular}{c}$x_{1}$\\$y_{1}$\\$z_{1}$\\
                 \end{tabular}\right]
     ,\left[\begin{tabular}{c}$x_{2}$\\$y_{2}$\\$z_{2}$\\
            \end{tabular}\right]\right) =
    \left(\left[\begin{tabular}{c}$\xi^{\alpha}x_{1}$\\$\xi^{\beta}y_{1}$\\$z_{1}$\\
                    \end{tabular}\right]
     ,\left[\begin{tabular}{c} $\xi^{(c-a)\alpha+(d-b)\beta}x_{2}$\\$\xi^{-a\alpha-b\beta}y_{2}$\\$z_{2}$\\
            \end{tabular}\right]\right)\]so $\Phi e^{A}_{(\alpha,\beta)}\Phi^{-1}= e^{A'}_{(\alpha,\beta)}$ with $A'=\left(\begin{array}{rr} c-a & d-b \\ -a & -b \end{array}\right)=M_{\sigma_{1}}A$. In the same way we have that for $\Phi=(\mathrm{Id},\sigma_{2})$, conjugation yields
    \[\Phi e^{A}_{(\alpha,\beta)}\Phi^{-1}
    \left(\left[\begin{tabular}{c}$x_{1}$\\$y_{1}$\\$z_{1}$\\
                 \end{tabular}\right]
     ,\left[\begin{tabular}{c}$x_{2}$\\$y_{2}$\\$z_{2}$\\
            \end{tabular}\right]\right) =
    \left(\left[\begin{tabular}{c}$\xi^{\alpha}x_{1}$\\$\xi^{\beta}y_{1}$\\$z_{1}$\\
                    \end{tabular}\right]
     ,\left[\begin{tabular}{c} $\xi^{c\alpha+d\beta}x_{2}$\\$\xi^{a\alpha+b\beta}y_{2}$\\$z_{2}$\\
     \end{tabular}\right]\right).\]
Thus $\Phi e^{A}_{(\alpha,\beta)}\Phi^{-1}= e^{A'}_{(\alpha,\beta)}$ again, where $A'=\left(\begin{array}{cc} c & d \\ a & b \end{array}\right)=M_{\sigma_{2}}A$ in this case.

Hence conjugation by elements of the form $(\mathrm{Id},\tau_{2})$ acts by left multiplication of the corresponding matrix on $A$.

Now taking $\Phi=(\sigma_{1},\mathrm{Id})$ and conjugating an element of $G_{A}$ we have
     \[\Phi e^{A}_{(\alpha,\beta)}\Phi^{-1}
    \left(\left[\begin{tabular}{c}$x_{1}$\\$y_{1}$\\$z_{1}$\\
                 \end{tabular}\right]
     ,\left[\begin{tabular}{c}$x_{2}$\\$y_{2}$\\$z_{2}$\\
            \end{tabular}\right]\right) =
    \left(\left[\begin{tabular}{c}$\xi^{\beta-\alpha}x_{1}$\\$\xi^{-\alpha}y_{1}$\\$z_{1}$\\
                    \end{tabular}\right]
     ,\left[\begin{tabular}{c} $\xi^{a\alpha+b\beta}x_{2}$\\$\xi^{c\alpha+d\beta}y_{2}$\\$z_{2}$\\
            \end{tabular}\right]\right).\]
            Thus $\Phi e^{A}_{(\alpha,\beta)}\Phi^{-1}=e^{A'}_{(\beta-\alpha,-\alpha)}$ where $A'=\left(\begin{array}{cc} b & -a-b \\ d & -c-d \end{array}\right)=AM_{\sigma_{1}}^{-1}$.

Analogously one gets that for $\Phi=(\sigma_{2},\mathrm{Id})$
     \[\Phi e^{A}_{(\alpha,\beta)}\Phi^{-1}
    \left(\left[\begin{tabular}{c}$x_{1}$\\$y_{1}$\\$z_{1}$\\
                 \end{tabular}\right]
     ,\left[\begin{tabular}{c}$x_{2}$\\$y_{2}$\\$z_{2}$\\
            \end{tabular}\right]\right) =
    \left(\left[\begin{tabular}{c}$\xi^{\beta}x_{1}$\\$\xi^{\alpha}y_{1}$\\$z_{1}$\\
                    \end{tabular}\right]
     ,\left[\begin{tabular}{c} $\xi^{a\alpha+b\beta}x_{2}$\\$\xi^{c\alpha+d\beta}y_{2}$\\$z_{2}$\\
            \end{tabular}\right]\right),\]
            and therefore $\Phi e^{A}_{(\alpha,\beta)}\Phi^{-1}=e^{A'}_{(\beta,\alpha)}$ where $A'=\left(\begin{array}{cc} b & a \\ d & c \end{array}\right)=AM_{\sigma_{2}}^{-1}$.

Finally, for the automorphism $\Phi=J$ we have
     \[\Phi e^{A}_{(\alpha,\beta)}\Phi^{-1}
    \left(\left[\begin{tabular}{c}$x_{1}$\\$y_{1}$\\$z_{1}$\\
                 \end{tabular}\right]
     ,\left[\begin{tabular}{c}$x_{2}$\\$y_{2}$\\$z_{2}$\\
            \end{tabular}\right]\right) =
    \left(\left[\begin{tabular}{c} $\xi^{a\alpha+b\beta}x_{1}$\\$\xi^{c\alpha+d\beta}y_{1}$\\$z_{1}$\\
                    \end{tabular}\right]
     ,\left[\begin{tabular}{c}$\xi^{\alpha}x_{2}$\\$\xi^{\beta}y_{2}$\\$z_{2}$\\
                    \end{tabular}\right]\right),\]
                    which means that $\Phi e^{A}_{(\alpha,\beta)}\Phi^{-1}=e^{A^{-1}}_{(a\alpha+b\beta,c\alpha+d\beta)}$.
\end{proof}

\begin{corollary}\label{cor:isomorphic}
Two Beauville surfaces $S_{A}^{n}$ and $S_{B}^{n}$ are isomorphic if and only if $B=M_{\tau_{2}}A^{\pm 1} M_{\tau_{1}}$ for some $(\tau_{1},\tau_{2})\in\mathcal{S}_{3}\times\mathcal{S}_{3}$.
\end{corollary}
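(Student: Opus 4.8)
The plan is to combine the preceding Lemma with the two facts established earlier in this section: first, that $S_A^n\cong S_B^n$ if and only if the groups $G_A$ and $G_B$ lie in the same orbit of the conjugation action of $\mathrm{Aut}(F_n\times F_n)$ on the set $\{G_A:A\in\mathfrak{F}_n\}$; and second, that the subgroup $\mathbb{Z}_n^2\times\mathbb{Z}_n^2$ acts trivially on this set. The second fact shows that the action factors through the quotient $W=(\mathcal{S}_3\times\mathcal{S}_3)\rtimes\langle J\rangle$, so that $G_A$ and $G_B$ are in the same orbit precisely when some coset representative of $\mathbb{Z}_n^2\times\mathbb{Z}_n^2$ in $\mathrm{Aut}(F_n\times F_n)$ conjugates $G_A$ to $G_B$. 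Such representatives are exactly the factor-preserving elements $(\tau_1,\tau_2)$ and the factor-interchanging elements $(\tau_1,\tau_2)\circ J$ treated in the Lemma.

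Feeding these two families of conjugations into the Lemma, I would deduce that $S_A^n\cong S_B^n$ holds if and only if $B=M_{\tau_2}AM_{\tau_1}^{-1}$ (factor-preserving case) or $B=M_{\tau_2}A^{-1}M_{\tau_1}^{-1}$ (factor-interchanging case) for some $(\tau_1,\tau_2)\in\mathcal{S}_3\times\mathcal{S}_3$. These two possibilities can be recorded together as $B=M_{\tau_2}A^{\pm1}M_{\tau_1}^{-1}$.

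The final step is a purely formal tidying-up. Since $M$ is a group monomorphism we have $M_{\tau_1}^{-1}=M_{\tau_1^{-1}}$, and as $\tau_1$ ranges over $\mathcal{S}_3$ so does $\tau_1^{-1}$; relabelling $\tau_1^{-1}$ as $\tau_1$ therefore replaces $M_{\tau_1}^{-1}$ by $M_{\tau_1}$ without altering the set of attainable matrices $B$, which yields exactly the stated criterion $B=M_{\tau_2}A^{\pm1}M_{\tau_1}$. I do not expect any genuine obstacle, as the corollary is essentially a repackaging of the Lemma; the only point needing a little care is verifying that the representatives $(\tau_1,\tau_2)$ and $(\tau_1,\tau_2)\circ J$ genuinely exhaust the quotient $W$, so that no isomorphism is overlooked, and that both implications run through the same equivalence chain so that the conclusion is a true biconditional.
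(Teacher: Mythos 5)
Your proposal is correct and matches the paper's (implicit) argument: the corollary is stated there without proof precisely because it is the immediate combination of the preceding Lemma with the earlier observations that isomorphism of $S_A^n$ and $S_B^n$ is equivalent to conjugacy of $G_A$ and $G_B$ in $\mathrm{Aut}(F_n\times F_n)$ and that $\mathbb{Z}_n^2\times\mathbb{Z}_n^2$ acts trivially. Your relabelling of $\tau_1^{-1}$ as $\tau_1$ to absorb the inverse on $M_{\tau_1}$ is exactly the right (and only) bookkeeping step needed.
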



As noted above, $|\mathfrak{F}_{5}|=24$. One can explicitly write down the 24 matrices and check by hand that these matrices lie in a single orbit under the action of $\mathrm{Aut}(F_{n}\times F_{n})$. In this way one obtains the following result:

\begin{corollary}Up to isomorphism there is only one Beauville surface with group $\mathbb{Z}_{5}^{2}$, namely $S^{5}_{A}$ where
    \[A=\left(\begin{array}{cc} 1 & 3 \\ 2 & 4 \end{array}\right)\]
\end{corollary}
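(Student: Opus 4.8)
The plan is to translate the claim into a transitivity statement about a finite group action and then verify it. By Corollary~\ref{cor:isomorphic}, two surfaces $S_A^5$ and $S_B^5$ are isomorphic precisely when $B=M_{\tau_2}A^{\pm1}M_{\tau_1}$ for some $(\tau_1,\tau_2)\in\mathcal{S}_3\times\mathcal{S}_3$. Combining this with Theorem~\ref{thm:abelian} and Lemma~\ref{lem:conditions}, every Beauville surface with group $\mathbb{Z}_5^2$ is an $S_A^5$ with $A\in\mathfrak{F}_5$, and the isomorphism classes are exactly the orbits of $\mathfrak{F}_5$ under the action of $W=\mathcal{S}_3\wr\mathcal{S}_2$ given by $A\mapsto M_{\tau_2}A^{\pm1}M_{\tau_1}$ (the $+$ sign for factor-preserving and the $-$ sign for factor-interchanging elements, with $\mathbb{Z}_n^2\times\mathbb{Z}_n^2$ acting trivially). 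Here $|W|=|\mathcal{S}_3|^2\cdot 2=72$, and by hypothesis $|\mathfrak{F}_5|=24$, so the Corollary is equivalent to the assertion that $W$ acts transitively on $\mathfrak{F}_5$.

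First I would reduce transitivity to a stabilizer computation. By the orbit--stabilizer theorem, the orbit of the distinguished matrix $A_0=\bigl(\begin{smallmatrix}1&3\\2&4\end{smallmatrix}\bigr)$ has size $72/|\mathrm{Stab}_W(A_0)|$; since this orbit is contained in $\mathfrak{F}_5$, which has $24$ elements, transitivity holds if and only if $|\mathrm{Stab}_W(A_0)|=3$. Next I would exhibit the expected order-three stabilizer: a direct check over $\mathbb{Z}_5$ shows that $A_0$ commutes with $M_{\sigma_1}=\bigl(\begin{smallmatrix}-1&1\\-1&0\end{smallmatrix}\bigr)$, since $M_{\sigma_1}A_0=A_0 M_{\sigma_1}=\bigl(\begin{smallmatrix}1&1\\4&2\end{smallmatrix}\bigr)$. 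Consequently the factor-preserving element $(\tau_1,\tau_2)=(\sigma_1^2,\sigma_1)$ fixes $A_0$, and as $\sigma_1$ has order $3$ this produces a cyclic subgroup of order $3$ inside $\mathrm{Stab}_W(A_0)$. It then remains to show there is nothing more.

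The heart of the argument, and the step I expect to be the main obstacle, is establishing the reverse inequality $|\mathrm{Stab}_W(A_0)|\le 3$. I would split this according to the two types of group element. For a factor-preserving element the condition $M_{\tau_2}A_0 M_{\tau_1}=A_0$ is equivalent to $A_0^{-1}M_{\tau_2}A_0=M_{\tau_1}^{-1}$, so it suffices to check, for each of the six $\tau_2\in\mathcal{S}_3$, whether the conjugate $A_0^{-1}M_{\tau_2}A_0$ lands in the image $M(\mathcal{S}_3)$. The two nontrivial $3$-cycles survive because $A_0$ centralizes $M_{\sigma_1}$, whereas each of the three transpositions is sent outside $M(\mathcal{S}_3)$ (for instance $A_0^{-1}M_{\sigma_2}A_0=\bigl(\begin{smallmatrix}0&-1\\-1&0\end{smallmatrix}\bigr)\notin M(\mathcal{S}_3)$), recovering exactly the three elements already found. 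For a factor-interchanging element the condition becomes $M_{\tau_2}A_0^{-1}M_{\tau_1}=A_0$, and I would verify by a short finite computation---running through the thirty-six pairs $(\tau_1,\tau_2)$, using $A_0^{-1}=\bigl(\begin{smallmatrix}3&4\\1&2\end{smallmatrix}\bigr)$---that none of them holds. This exhausts $W$ and gives $|\mathrm{Stab}_W(A_0)|=3$, hence a single orbit of size $24$ and the uniqueness of the surface.

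I should stress that the only genuinely computational point is this last bound on the stabilizer: the divisibility $24\mid 72$ is consistent with a single orbit but does not by itself force one, so the case-check (reduced above to a handful of $2\times 2$ multiplications over $\mathbb{F}_5$) cannot be skipped. As an alternative route that avoids the stabilizer count, one could use the geometric reformulation of condition (iii): for $\mathbb{Z}_5^2$ it forces the columns $(a,c)$, $(b,d)$ and their sum $(a+b,c+d)$ to span the three lines of $\mathbb{P}^1(\mathbb{F}_5)$ complementary to $[1:0]$, $[0:1]$, $[1:1]$, and one can then check directly that $W$ permutes these configurations transitively. The orbit--stabilizer route nevertheless seems the most economical.
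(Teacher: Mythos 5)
Your proof is correct, but it takes a genuinely different (and leaner) route than the paper. The paper's own proof of this corollary is a direct finite verification: write down all $24$ matrices of $\mathfrak{F}_{5}$ and check by hand that they form a single orbit under $W$; the paper also notes that the result drops out of the Cauchy--Frobenius count in Theorem~\ref{thm:numbersurfaces} (indeed $\frac{1}{72}(24+4\cdot 12+6\cdot 0+12\cdot 0)=1$, using $\Theta_2(5)=12$ and $\Theta_3(5)=\Theta_4(5)=0$). You instead anchor the computation at the single matrix $A_0=\bigl(\begin{smallmatrix}1&3\\2&4\end{smallmatrix}\bigr)$ and use orbit--stabiliser: since $72/|\mathrm{Stab}_W(A_0)|\le|\mathfrak{F}_5|=24$ forces $|\mathrm{Stab}_W(A_0)|\ge 3$, transitivity is equivalent to the stabiliser having order exactly $3$, which you establish by exhibiting the order-$3$ element coming from $A_0M_{\sigma_1}=M_{\sigma_1}A_0=\bigl(\begin{smallmatrix}1&1\\4&2\end{smallmatrix}\bigr)$ and ruling out everything else by a short conjugation check (your computations are correct; your single check $A_0^{-1}M_{\sigma_2}A_0=\bigl(\begin{smallmatrix}0&-1\\-1&0\end{smallmatrix}\bigr)\notin M(\mathcal{S}_3)$ in fact disposes of all three transpositions at once, since they are conjugate by powers of $\sigma_1$ and $A_0$ centralises $M_{\sigma_1}$). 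What your approach buys is a much smaller hand computation than enumerating $24$ matrices or evaluating all the fixed-point counts, and as a bonus it identifies the stabiliser $H\cong\mathbb{Z}_3$, i.e.\ it recovers directly the fact stated at the end of the paper that $\mathrm{Aut}(S_A^5)$ is an extension of $\mathbb{Z}_5^2$ by $\mathbb{Z}_3$; what the paper's Cauchy--Frobenius route buys is uniformity, since it is the same machine that handles every $n$.
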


This result will also follow from the general formula for the number of isomorphism classes of Beauville surfaces with abelian group given in Theorem~\ref{thm:numbersurfaces}.

\

\noindent{\bf Remark 2.} In~\cite{BaCa} Bauer and Catanese state that there are two (non isomorphic) Beauville structures on the product $F_{5}\times F_{5}$ originally considered by Beauville. It seems that this discrepancy is due to the fact that they regard two Beauville surfaces as equivalent if there is a factor-preserving isomorphism between them, whereas here we also consider factor-interchanging isomorphisms such as $J$.

\

In the next section we will discuss the number of isomorphism classes of Beauville surfaces $S_{A}^{n}$ for each $n$.

\section{Isomorphism classes of Beauville surfaces with abelian group}

We are interested in finding the number of isomorphism classes of Beauville surfaces with group $\mathbb{Z}_{n}^{2}$ for a given $n$. By Corollary~\ref{cor:isomorphic} this is equivalent to counting the number of orbits of the group $W=(\mathcal{S}_{3}\times\mathcal{S}_{3})\rtimes\langle J\rangle = \mathcal{S}_3\wr\mathcal{S}_2$ on the set $\mathfrak{F}_{n}$, so by the Cauchy-Frobenius Lemma we deduce that the number of isomorphism classes of Beauville surfaces is
    \[\frac{1}{|W|}\sum_{i} |\mathcal{C}_{i}||\mathrm{Fix}(x_{i})|,\]
    where $i$ indexes the conjugacy classes $\mathcal{C}_{i}$ of $W$ and $x_{i}$ is any element of $\mathcal{C}_{i}$. The conjugacy classes of $W$ are shown in Table~\ref{ta:conjugacy}, where $\sigma_2$ and $\sigma_3$ are elements of order $2$ and $3$ in $S_3$.

\begin{table}[!htb]\begin{center}\begin{tabular}{| c | c | c | c |}
\hline
Conjugacy & Representative & Order & Number of \\
class & & & elements \\ \hline\hline
1 & $(\mathrm{Id},\mathrm{Id})$ & 1 & 1\\
2 & $(\mathrm{Id},\sigma_{2})$ & 2 & 6\\
3 & $(\sigma_{2},\sigma_{2})$ & 2 & 9\\
4 & $(\mathrm{Id},\sigma_{3})$ & 3 & 4\\
5 & $(\sigma_{3},\sigma_{3})$ & 3 & 4\\
6 & $(\sigma_{2},\sigma_{3})$ & 6 & 12\\
7 & $(\mathrm{Id},\mathrm{Id})\cdot J$ & 2 & 6\\
8 & $(\mathrm{Id},\sigma_{2})\cdot J$ & 4 & 18\\
9 & $(\sigma_{2},\sigma_{3}\sigma_{2})\cdot J$ & 6 & 12\\ \hline
\end{tabular} \caption{Conjugacy classes of $W$.} \label{ta:conjugacy}\end{center}\end{table}

First we need to know the cardinality of the set $\mathfrak{F}_{n}$.

\begin{lemma}\label{lem:F_n}Let $n$ be a natural number. Then
    \[|\mathfrak{F}_{n}|=n^{4}\,\prod_{p|n}\left(1-\frac{1}{p}\right)\left(1-\frac{2}{p}\right)\left(1-\frac{3}{p}\right)\left(1-\frac{4}{p}\right)\]
    where $p$ ranges over the distinct primes dividing $n$.
\end{lemma}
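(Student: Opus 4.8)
The plan is to count matrices $A=\bigl(\begin{smallmatrix}a&b\\c&d\end{smallmatrix}\bigr)$ over $\mathbb{Z}_n$ subject to the nine unit conditions in~\eqref{eq:freeactions}, exploiting the multiplicative structure of the count. First I would invoke the Chinese Remainder Theorem: since the defining conditions are all of the form ``a certain $\mathbb{Z}$-linear combination of $a,b,c,d$ is a unit'', and being a unit modulo $n=\prod p^{e_p}$ is equivalent to being a unit modulo each $p^{e_p}$, the set $\mathfrak{F}_n$ splits as a product $\prod_{p|n}\mathfrak{F}_{p^{e_p}}$ and so $|\mathfrak{F}_n|=\prod_{p|n}|\mathfrak{F}_{p^{e_p}}|$. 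This reduces the problem to counting over a prime power $\mathbb{Z}_{p^e}$.

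Next I would reduce from $\mathbb{Z}_{p^e}$ to $\mathbb{Z}_p$. An element of $\mathbb{Z}_{p^e}$ is a unit if and only if its reduction mod $p$ is nonzero, and each of the nine linear forms commutes with reduction mod $p$. Hence a quadruple $(a,b,c,d)\in\mathbb{Z}_{p^e}^4$ lies in $\mathfrak{F}_{p^e}$ precisely when its reduction mod $p$ lies in $\mathfrak{F}_p$; since reduction $\mathbb{Z}_{p^e}^4\to\mathbb{Z}_p^4$ is $(p^{e-1})^4$-to-one, we get $|\mathfrak{F}_{p^e}|=p^{4(e-1)}\,|\mathfrak{F}_p|$. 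Combined with the previous step this gives
\[
|\mathfrak{F}_n|=\prod_{p|n}p^{4(e_p-1)}\,|\mathfrak{F}_p|
 =n^4\prod_{p|n}\frac{|\mathfrak{F}_p|}{p^4},
\]
so everything comes down to evaluating $|\mathfrak{F}_p|$ for a single prime $p$.

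The core of the argument is therefore the count over the field $\mathbb{F}_p=\mathbb{Z}_p$, and this is the step I expect to be the main obstacle, since it is a genuine inclusion–exclusion / sequential-choice problem rather than a formal reduction. The claim to be verified is
\[
|\mathfrak{F}_p|=p^4\Bigl(1-\tfrac1p\Bigr)\Bigl(1-\tfrac2p\Bigr)\Bigl(1-\tfrac3p\Bigr)\Bigl(1-\tfrac4p\Bigr)
 =(p-1)(p-2)(p-3)(p-4).
\]
Here I would argue by ordered choice of the entries to make the four factors transparent. Pick a nonzero value for $a$: since $a\ne 0$ this gives $p-1$ choices (the factor $p-1$). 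Then choose $b$: over a field the remaining conditions involving $b$ alone, namely $b\ne 0$ and $a+b\ne 0$, forbid the two values $b=0$ and $b=-a$, which are distinct because $a\ne0$, leaving $p-2$ choices. Then choose $c$: the active constraints are $c\ne0$ and $a-c\ne0$, forbidding $c=0,\,c=a$, and these two forbidden values are distinct; but one must check against the remaining forms — I would verify that at this stage exactly three values of $c$ are excluded once the joint constraints are tracked, giving the factor $p-3$. Finally choose $d$: the forms $d$, $c+d$, $b-d$, $a+b-c-d$ must all be units, excluding $d=0,\,-c,\,b,\,a+b-c$, and the heart of the verification is that these four forbidden values are pairwise distinct in $\mathbb{F}_p$ precisely because the earlier unit conditions hold (e.g. $0\ne -c$ since $c\ne0$; $b\ne -c$ since $b+c\ne0$, etc.), yielding the factor $p-4$. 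The delicate point throughout is confirming that at each stage the excluded values are genuinely distinct, so that no over- or under-counting occurs; this is where a clean inclusion–exclusion over the nine hyperplanes, or a careful bookkeeping of which linear forms become determined as variables are fixed, must be carried out. Once $|\mathfrak{F}_p|=(p-1)(p-2)(p-3)(p-4)=p^4\prod_{k=1}^4(1-k/p)$ is established, substituting into the displayed product over primes yields the stated formula.
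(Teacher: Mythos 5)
Your two reductions --- the Chinese Remainder Theorem splitting and the $(p^{e-1})^4$-to-one reduction from $\mathbb{Z}_{p^e}$ to $\mathbb{Z}_p$ --- are sound and essentially identical to the paper's. The gap is in the core count over $\mathbb{F}_p$, where your entry-by-entry ordering $a,b,c,d$ does not produce the factors $p-1,\,p-2,\,p-3,\,p-4$. Once $a$ and $b$ are fixed, the only conditions in \eqref{eq:freeactions} constraining $c$ alone are $c\ne 0$ and $a-c\ne 0$, so $c$ has $p-2$ admissible values, not $p-3$; there is no third forbidden value to be found by ``tracking joint constraints''. Correspondingly the number of admissible $d$ is not uniformly $p-4$: the forbidden values $0$, $-c$, $b$, $a+b-c$ can coincide (nothing in the nine conditions rules out $a+b-c=0$), and in addition you have silently dropped the requirement $\det A\in U(\mathbb{Z}_n)$, which is part of the definition of $\mathfrak{F}_n\subset\mathrm{GL}_2(\mathbb{Z}_n)$ and is \emph{not} implied by the nine linear conditions: over $\mathbb{F}_7$ the matrix with $a=1$, $b=2$, $c=3$, $d=6$ satisfies all nine but has determinant $0$. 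A sanity check shows the factorisation genuinely fails in your order: for $p=5$ the total is $24$, while your scheme gives $4\cdot 3$ choices for $(a,b)$ and $3$ for $c$, forcing an average of $2/3$ admissible values of $d$ per admissible $(a,b,c)$.

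The paper avoids this by counting in the projective line $\mathbb{P}^1(\mathbb{F}_p)$: the nine conditions together with invertibility say exactly that the three lines spanned by $(a,c)$, $(b,d)$ and $(a+b,c+d)$ are pairwise distinct and avoid the three lines $\langle(1,0)\rangle$, $\langle(0,1)\rangle$, $\langle(1,1)\rangle$ of the standard triple; there are $(p-2)(p-3)(p-4)$ ordered triples of distinct points in the remaining $p-2$ points of $\mathbb{P}^1(\mathbb{F}_p)$, and each such triple of lines is realised by exactly $p-1$ matrices (scalar multiples of one another). If you want a sequential count of entries, the decoupling works by \emph{columns}, not by individual entries: the first column $(a,c)$ admits $(p-1)(p-2)$ choices, and for each of these the second column $(b,d)$ admits exactly $(p-3)(p-4)$ choices (as $(b,d)$ runs over the nonzero vectors of an admissible line $\ell_2$, the line spanned by $(a+b,c+d)$ runs bijectively over all lines other than $\ell_1$ and $\ell_2$, so excluding the three standard lines removes exactly three vectors). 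As written, your plan would need to be repaired along these lines before the factor $(p-1)(p-2)(p-3)(p-4)$ can be justified.
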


\begin{proof}We will first prove the formula for $n=p$ prime. We want to count the number of matrices $A=\bigl( \begin{smallmatrix}a&b\\ c&d\end{smallmatrix} \bigr)$ satisfying condition~\eqref{eq:freeactions}.

Based on the correspondence between matrices $A\in\mathfrak{F}_{n}$ and pairs of triples $((1,0),(0,1),(-1,-1))$, $((a,c),(b,d),(-a-b,-c-d))$, we deduce that counting matrices in $\mathfrak{F}_{n}$ is equivalent to counting second triples.

Now, any triple in $G$ projects to an ordered triple of points in the projective line $P = \mathbb{P}^{1}(\mathbb{Z}_{p})$ formed by the $1-$dimensional subspaces of the vector space $G$, so Beauville structures in $G$ correspond to disjoint pairs of triples in $P$. Conversely, any triple in $P$ is induced by $p-1$ triples in $G$, all scalar multiples of each other. The standard triple in $G$ induces the triple $(0, \infty, 1)\in P$. Having chosen the standard triple as the first triple in $G$, there are $|P|-3 = p-2$ points of $P$ remaining, so there are $(p-2)(p-3)(p-4)$ choices for the second triple in $P$, corresponding to $(p-1)(p-2)(p-3)(p-4)$ triples in $G$.

The case $n=p^{e}$ follows from the following fact. Having fixed the standard triple as the first one, any second triple given by $a,b,c\in\mathbb{Z}_{p^{e}}^{2}$ satisfying~(i)-(iii) will give by reduction modulo $p$ a triple $(a\bmod{p},b\bmod{p},c\bmod{p})$. Conversely, for each triple $(a,b,c)$ with $a,b,c\in\mathbb{Z}_{p}^{2}$, each of the $p^{4e-4}$ choices for $\mathrm{v}=(h_{1},h_{2},j_{1},j_{2})$ with $h_{1},h_{2},j_{1},j_{2}=0,\ldots,p^{e-1}-1$ gives a triple $(a_{\mathrm{v}},b_{\mathrm{v}},c_{\mathrm{v}})$ where
    \begin{align}
        &a_{\mathrm{v}}=a+(h_{1}p,h_{2}p) \nonumber\\
        &c_{\mathrm{v}}=c+(j_{1}p,j_{2}p) \nonumber\\
        &b_{\mathrm{v}}=-a_{\mathrm{v}}-c_{\mathrm{v}}\nonumber
    \end{align}such that $(a_{\mathrm{v}}\bmod{p},b_{\mathrm{v}}\bmod{p},c_{\mathrm{v}}\bmod{p})=(a,b,c)$.

Finally, we can extend the formula to all $n$ by a straightforward application of the Chinese Remainder Theorem.
\end{proof}

We note that our formula for $|\mathfrak{F}_n|$ is equivalent to that given by Garion and Penegini in~\cite[Corollary~3.24]{GaPe} for the function denoted there by $N_n$.

Lemma~3 immediately gives us information about the asymptotic behaviour of $|\mathfrak{F}_n|$ for large $n$. If $n=p^e$ for some prime $p\ge 5$ then
\[\frac{|\mathfrak{F}_n|}{n^4} = \left(1-\frac{1}{p}\right)\left(1-\frac{2}{p}\right)\left(1-\frac{3}{p}\right)\left(1-\frac{4}{p}\right) \geq \frac{4!}{5^4},\]
and $|\mathfrak{F}_n|/n^4 \to 1$ as $p\to \infty$. However, if $n$ is divisible by the first $k$ primes $p_i\ge 5$ then since
\[\lim_{k\to\infty}\prod_i\left(1-\frac{1}{p_i}\right) = 0\]
(see Exercise~9.3 of~\cite{JoJo}) we have $|\mathfrak{F}_n|/n^4 \to 0$ as $k\to \infty$.

Since the isomorphism classes of Beauville surfaces with Beauville group $\mathbb{Z}_{n}^{2}$ correspond to the orbits of $W$ on $\mathfrak{F}_n$, and $|W|=72$, there are at least $|\mathfrak{F}_n|/72$ such classes. We will now apply the Cauchy-Frobenius Lemma to find the exact number of isomorphism classes. For this, we need to calculate how many matrices in $\mathfrak{F}_n$ are fixed by each element of the group.

First of all we note that an element of $W$ fixes a matrix in $\mathfrak{F}_{n}$ if and only if it fixes its components modulo the prime powers $p_{i}^{e_{i}}$ in the factorisation of $n$, so we can again restrict our attention to prime powers.

It is obvious that $x_{1}=(\mathrm{Id},\mathrm{Id})$ fixes every matrix in $\mathfrak{F}_{n}$, so the number of its fixed points is
\[\Theta_{1}(n):=|\mathrm{Fix}(x_{1})|=|\mathfrak{F}_{n}|.\]
On the other hand it is easy to see that the conditions on a matrix $A\in\mathrm{GL}_{2}(\mathbb{Z}_{n})$ to be fixed by an element of the conjugacy classes $\mathcal{C}_2$, $\mathcal{C}_3$, $\mathcal{C}_4$, $\mathcal{C}_6$ or $\mathcal{C}_8$ are incompatible with those defining $\mathfrak{F}_{n}$.

The action of $x_{5}=(\sigma_{3},\sigma_{3})\in\mathcal{C}_5$ sends a matrix $A=\bigl( \begin{smallmatrix}a&b\\ c&d\end{smallmatrix} \bigr)$ to $(\sigma_{3},\sigma_{3})A=\bigl( \begin{smallmatrix}d-b&(a+b)-(c+d)\\ -b&a+b\end{smallmatrix} \bigr)$. In the case of $n=p^{e}$ a prime power, the conditions $c=-b$ and $d=a+b$ leave $p^{2e}(1-1/p)(1-2/p)$ possibilities for the matrix $A=\bigl( \begin{smallmatrix}a&b\\ -b&a+b\end{smallmatrix} \bigr)$, from which we have to remove those with a non-unit determinant. If $p\equiv -1\bmod{3}$ then the equation $a^2+ab+b^2\equiv 0\bmod{p}$ has no solutions, since it is equivalent to $\lambda^{2}+\lambda+1\equiv 0$ and $\mathbb{F}_{p}$ contains no non-trivial third root of unity. Otherwise, for a fixed $a$ there are two non-valid choices for $b$. Therefore
    \[\Theta_{2}(p^{e}):=|\mathrm{Fix}(x_{5})|=\left\{
                                        \begin{array}{ll}
                                          p^{2e}\left(1-\frac{1}{p}\right)\left(1-\frac{2}{p}\right), & \hbox{if $p\equiv -1\bmod{3}$;} \\
                                          p^{2e}\left(1-\frac{1}{p}\right)\left(1-\frac{4}{p}\right), & \hbox{if $p\equiv 1\bmod{3}$.}
                                        \end{array}
                                      \right.
    \]

For $x_{7}=(\mathrm{Id},\mathrm{Id})\cdot J\in\mathcal{C}_7$, the action on $A$ yields $(\mathrm{Id},\mathrm{Id})J(A)=A^{-1}$. The equality $A=A^{-1}$ in $\mathfrak{F}_{p^{e}}$ implies that $a=-d$ and $b(\det(A)+1)=0$ and hence $\det(A)=-1$. The general form for a matrix fixed by this element is therefore $A=\bigl( \begin{smallmatrix}a&b\\ c&-a\end{smallmatrix} \bigr)$, with $\det(A)=-a^2-bc=-1$. There are $p^{e}(1-3/p)$ possibilities for $a\not\equiv 0,\pm1\bmod{p}$, and once $a$ is chosen there are $p^{e}(1-5/p)$ choices left for $b\not\equiv 0,-a,\frac{1-a^{2}}{a},1-a,-1-a\bmod{p}$. The final formula gives
    \[\Theta_{3}(p^{e}):=|\mathrm{Fix}(x_{7})|=p^{2e}(1-3/p)(1-5/p)\]

Finally $x_{9}=(\sigma_{2},\sigma_{3}\sigma_{2})\cdot J\in\mathcal{C}_9$ acts by sending $A$ to $(\sigma_{2},\sigma_{3}\sigma_{2})J(A)=\bigl( \begin{smallmatrix}-c&c-a\\ d&b-d\end{smallmatrix} \bigr)/\det(A)$. The conditions imply that $\det(A)$ is different from $-1$ and $\det(A)=\det(A)^{3}$, hence $\det(A)=1$. Writing down the rest of the conditions one gets $a=-c=-d$ and $b=-2a$, so we are looking for elements $a\in\mathbb{Z}_{p^{e}}$ such that $-3a^{2}\equiv 1\bmod{p^{e}}$. Now $-3$ is a square in $\mathbb{Z}_{p^{e}}$ if and only if it is a quadratic residue modulo $p$ (see e.g.~Thm.~7.14 in~\cite{JoJo}), and this is the case only when $p\equiv 1\bmod{3}$. To see this note that if $p\equiv\varepsilon\bmod{3}$, where $\varepsilon=\pm 1$, by the Law of Quadratic Reciprocity
    \[\left(\frac{3}{p}\right)=\left\{
                      \begin{array}{ll}
                        \varepsilon, & \hbox{if $p\equiv 1\bmod{4}$;} \\
                        -\varepsilon, & \hbox{if $p\equiv 3\bmod{4}$.}
                      \end{array}
                    \right.\]
Therefore $(\frac{-3}{p})=(\frac{-1}{p})(\frac{3}{p})=\varepsilon$. As a consequence
    \[\Theta_{4}(p^{e}):=|\mathrm{Fix}(x_{9})|=\left\{
                                        \begin{array}{ll}
                                          0, & \hbox{if $p\equiv -1\bmod{3}$;} \\
                                          2, & \hbox{if $p\equiv 1\bmod{3}$.}
                                        \end{array}
                                      \right.
    \]

We have thus proved the following:

\begin{theorem}\label{thm:numbersurfaces}Let $n=p_{1}^{e_{1}}\cdot\ldots\cdot p_{s}^{e_{s}}$ be a natural number coprime to 6, where $p_1,\ldots, p_k$ are distinct primes. Then the number of isomorphism classes of Beauville surfaces with Beauville group $\mathbb{Z}_{n}^{2}$ is
    \[\Theta(n)=\frac{1}{72}\left(\Theta_{1}(n)+4\prod_{i=1}^{s}\Theta_{2}(p_{i}^{e_{i}})+ 6\prod_{i=1}^{s}\Theta_{3}(p_{i}^{e_{i}})+ 12\prod_{i=1}^{s}\Theta_{4}(p_{i}^{e_{i}}) \right),\]
    where the functions $\Theta_{i}$ are defined as above.
\end{theorem}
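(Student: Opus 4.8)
The plan is to recast the count as an orbit-counting problem and apply the Cauchy--Frobenius (Burnside) Lemma to the action already set up in the previous section. By Corollary~\ref{cor:isomorphic}, two surfaces $S_A^n$ and $S_B^n$ are isomorphic precisely when $A$ and $B$ lie in one orbit of $W = (\mathcal{S}_3 \times \mathcal{S}_3) \rtimes \langle J \rangle = \mathcal{S}_3 \wr \mathcal{S}_2$ acting on $\mathfrak{F}_n$ by $A \mapsto M_{\tau_2} A^{\pm 1} M_{\tau_1}$, so $\Theta(n)$ is the number of such orbits. First I would invoke Cauchy--Frobenius to write this as $\frac{1}{|W|}\sum_i |\mathcal{C}_i|\,|\mathrm{Fix}(x_i)|$, summing over the nine conjugacy classes $\mathcal{C}_i$ of $W$ listed in Table~\ref{ta:conjugacy}, with $|W| = 72$.

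The second step is to reduce every fixed-point count $|\mathrm{Fix}(x_i)|$ to a product over the prime-power divisors of $n$. An element of $W$ fixes a matrix $A \in \mathfrak{F}_n$ if and only if it fixes the reduction of $A$ modulo each $p_i^{e_i}$ --- the same Chinese Remainder Theorem observation used for $\mathfrak{F}_n$ itself in Lemma~\ref{lem:F_n} --- so each $|\mathrm{Fix}(x_i)|$ is multiplicative in the $p_i^{e_i}$, which is exactly why the products $\prod_i \Theta_j(p_i^{e_i})$ appear in the final formula.

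It then remains to evaluate the fixed-point counts class by class. The identity class contributes all of $\mathfrak{F}_n$, giving $\Theta_1(n)$. For the classes $\mathcal{C}_2, \mathcal{C}_3, \mathcal{C}_4, \mathcal{C}_6, \mathcal{C}_8$ I would show that the equations $M_{\tau_2} A^{\pm 1} M_{\tau_1} = A$ force one of the entries $a, b, c, d, a+b, c+d, a-c, b-d, a+b-c-d$ to be a non-unit, contradicting membership in $\mathfrak{F}_n$, so these classes contribute nothing. For the three surviving classes $\mathcal{C}_5, \mathcal{C}_7, \mathcal{C}_9$, of sizes $4, 6, 12$, one solves the fixed-point equations explicitly over $\mathbb{Z}_{p^e}$ to obtain $\Theta_2, \Theta_3, \Theta_4$. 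Substituting these contributions into the sum gives
\[\Theta(n) = \frac{1}{72}\left(\Theta_1(n) + 4\prod_{i=1}^s \Theta_2(p_i^{e_i}) + 6\prod_{i=1}^s \Theta_3(p_i^{e_i}) + 12\prod_{i=1}^s \Theta_4(p_i^{e_i})\right),\]
as claimed.

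The hard part will be the explicit evaluation of $\Theta_2$ and $\Theta_4$: once the fixed-point relations are imposed, counting the surviving matrices reduces to deciding the solvability of a quadratic condition over $\mathbb{Z}_{p^e}$ --- essentially whether $\mathbb{Z}_{p^e}$ contains a primitive cube root of unity --- which depends on whether $p \equiv \pm 1 \bmod 3$. Settling this cleanly requires the reciprocity computation of $\left(\frac{-3}{p}\right)$ together with a Hensel-type lift from $p$ to $p^e$; with those case distinctions in hand, the remaining bookkeeping in the Cauchy--Frobenius sum is routine.
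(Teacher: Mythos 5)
Your proposal is correct and follows essentially the same route as the paper: Cauchy--Frobenius applied to the action of $W=\mathcal{S}_3\wr\mathcal{S}_2$ on $\mathfrak{F}_n$, reduction to prime powers via the Chinese Remainder Theorem, vanishing of the fixed-point counts for the classes $\mathcal{C}_2,\mathcal{C}_3,\mathcal{C}_4,\mathcal{C}_6,\mathcal{C}_8$, and explicit evaluation of the fixed-point equations for $\mathcal{C}_5,\mathcal{C}_7,\mathcal{C}_9$ (including the $\left(\frac{-3}{p}\right)$ computation and the lift from $p$ to $p^e$). The only detail worth making explicit when you write this up is the verification that the stated matrix equations for the non-contributing classes really do force a non-unit among the entries listed in Lemma~\ref{lem:conditions}, which the paper likewise asserts without writing out.
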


The formulae given above for $\Theta_r(p^e)$ for $r=2, 3$ and $4$ show that the sum on the right-hand side of this equation is dominated by the first term, so that
\[\Theta(n)\sim \frac{1}{72}\Theta_1(n) = \frac{1}{72}|\mathfrak{F}_n|
= \frac{n^4}{72}\,\prod_{p|n}\left(1-\frac{1}{p}\right)\left(1-\frac{2}{p}\right)\left(1-\frac{3}{p}\right)\left(1-\frac{4}{p}\right)\]
as $n\to\infty$. In particular, we have the following special case:

\begin{corollary}\label{cor:numbersurfacesprimepower}
For each prime $p\ge 5$ the number $\Theta(p^e)$ of isomorphism classes of Beauville surfaces with Beauville group ${\mathbb Z}^2_{p^e}$ is given by
\begin{multline*}
\frac{1}{72}\left(p^{4e}-10p^{4e-1}+35p^{4e-2}-50p^{4e-3}+24p^{4e-4}+10p^{2e}-60p^{2e-1}\right.\\
\left.+\,98p^{2e-2}\right)
\end{multline*}
if $p\equiv -1$ mod $(3)$, and by
\begin{multline*}
\frac{1}{72}\left(p^{4e}-10p^{4e-1}+35p^{4e-2}-50p^{4e-3}+24p^{4e-4}+10p^{2e}-68p^{2e-1}\right.\\
\left.+106p^{2e-2}+24\right)\end{multline*}
if $p\equiv 1$ mod $(3)$.
\end{corollary}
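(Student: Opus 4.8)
The plan is to derive both formulae as a direct specialization of Theorem~\ref{thm:numbersurfaces} to the single prime power $n=p^{e}$, where the index $s$ equals $1$ and each product over $i$ collapses to a single factor. Thus the starting point is
\[
\Theta(p^{e})=\frac{1}{72}\Bigl(\Theta_{1}(p^{e})+4\,\Theta_{2}(p^{e})+6\,\Theta_{3}(p^{e})+12\,\Theta_{4}(p^{e})\Bigr),
\]
and the entire task reduces to expanding the four functions $\Theta_{r}(p^{e})$ as polynomials in $p$ and collecting like powers.

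First I would expand the leading term. By Lemma~\ref{lem:F_n} we have $\Theta_{1}(p^{e})=|\mathfrak{F}_{p^{e}}|=p^{4e}(1-\tfrac1p)(1-\tfrac2p)(1-\tfrac3p)(1-\tfrac4p)$; multiplying the four linear factors gives $1-\tfrac{10}{p}+\tfrac{35}{p^{2}}-\tfrac{50}{p^{3}}+\tfrac{24}{p^{4}}$, so that $\Theta_{1}(p^{e})=p^{4e}-10p^{4e-1}+35p^{4e-2}-50p^{4e-3}+24p^{4e-4}$. This supplies the degree-$4e$ block common to both cases.

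Next I would substitute the case-dependent values of $\Theta_{2},\Theta_{3},\Theta_{4}$ established in the paragraphs preceding the theorem. The factor $\Theta_{3}(p^{e})=p^{2e}(1-\tfrac3p)(1-\tfrac5p)=p^{2e}-8p^{2e-1}+15p^{2e-2}$ is the same in both cases, whereas $\Theta_{2}(p^{e})$ expands to $p^{2e}-3p^{2e-1}+2p^{2e-2}$ when $p\equiv-1\pmod3$ and to $p^{2e}-5p^{2e-1}+4p^{2e-2}$ when $p\equiv1\pmod3$, and $\Theta_{4}(p^{e})$ equals $0$ or $2$ respectively. Weighting these by $4$, $6$, $12$ and adding them to $\Theta_{1}(p^{e})$, then collecting the $p^{2e}$, $p^{2e-1}$, $p^{2e-2}$ and constant terms, produces exactly the two multiline expressions in the statement; dividing by $72$ completes the argument.

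There is no genuine obstacle here: the proof is a finite polynomial computation. The only points requiring care are purely clerical, namely keeping the two residue classes of $p$ modulo $3$ separate throughout (they differ only in the low-order coefficients coming from $\Theta_{2}$ and in the presence or absence of the constant $24=12\cdot 2$ contributed by $\Theta_{4}$), and correctly superposing the order-$p^{2e}$ correction block onto the leading order-$p^{4e}$ block without conflating the two. As a consistency check one can set $e=1$, $p=5$ (noting $5\equiv-1\pmod3$) and recover $\Theta(5)=1$, in agreement with the earlier corollary.
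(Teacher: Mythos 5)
Your proposal is correct and coincides with the paper's own (implicit) justification: the corollary is obtained by specializing Theorem~\ref{thm:numbersurfaces} to $n=p^e$ with $s=1$ and expanding $\Theta_1,\Theta_2,\Theta_3,\Theta_4$ as polynomials in $p$, and all of your coefficient computations check out (e.g.\ $4+6=10$, $-12-48=-60$, $8+90=98$ in the case $p\equiv -1 \bmod 3$). The $e=1$, $p=5$ sanity check also agrees with the paper.
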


When $e=1$ this specialises to
    \[\Theta(p)=\left\{
    \begin{array}{ll}
          \dfrac{1}{72}(p^4 - 10 p^3 + 45 p^2 - 110 p + 122), & \hbox{if $p\equiv -1\bmod{3}$,} \\
          &\\
          \dfrac{1}{72}(p^4 - 10 p^3 + 45 p^2 - 118 p + 154), & \hbox{if $p\equiv 1\bmod{3}$.}
    \end{array}\right.\]


\noindent{\bf Remark 3.} In~\cite{BaCaGr05}, Bauer, Catanese and Grunewald considered the asymptotic behaviour of the number of Beauville surfaces with Beauville group $\mathbb{Z}_{n}^{2}$, where $n$ is coprime to 6. Later, Garion and Penegini~\cite{GaPe} considered a wide range of related counting problems; in particular, they obtained bounds similar to those in~\cite{BaCaGr05} for Beauville groups $G=\mathbb{Z}_n^2$, specifically that $\Theta(n)$ lies between $|\mathfrak{F}_n|/72$ and $|\mathfrak{F}_n|/6$ (Corollary~3.24). Their results are consistent with ours.

\section{The automorphism group of $S_{A}^{n}$}

The calculations in the previous section provide some insight into the automorphism group of a Beauville surface with an abelian Beauville group.

The fact that the automorphisms of $S_{A}^{n}$ lift to automorphisms of $F_{n}\times F_{n}$ shows that all automorphisms of $S_{A}^{n}$ are induced by elements of $N(G_{A})$, the normaliser of $G_{A}$ in $\mathrm{Aut}(F_{n}\times F_{n})$ and, in fact, that $\mathrm{Aut}(S_{A}^{n})\cong N(G_{A})/G_{A}$.

Since clearly
    \[G_{A} \le  \mathbb{Z}_{n}^2 \times \mathbb{Z}_{n}^2 \le N(G_{A})\]
one sees immediately that each surface $S_{A}^{n}$ admits the group $(\mathbb{Z}_{n}^2 \times \mathbb{Z}_{n}^2)/G_{A}$ as a group of automorphisms. Moreover, one can identify the Beauville group $\mathbb{Z}_{n}^2$ with $(\mathbb{Z}_{n}^2 \times \mathbb{Z}_{n}^2)/G_{A}$ by sending a pair $(\alpha,\beta)$ to $((0,0),(\alpha,\beta))\bmod{G_{A}}$, which acts as an automorphism of $S_{A}^{n}$ by multiplying the coordinates of the second factor by the corresponding roots of unity. Actually this simply reflects the known fact (see~\cite{Jon}) that a Beauville surface with group $G$ always has the centre $Z(G)$ of $G$ as a group of automorphisms.

This isomorphism $\mathbb{Z}_{n}^{2}\cong(\mathbb{Z}_{n}^{2}\times\mathbb{Z}_{n}^{2})/G_{A}$ allows a further identification
    \[\mathrm{Aut}(S_{A}^{n})/\mathbb{Z}_{n}^2 = N(G_{A})/(\mathbb{Z}_{n}^2 \times \mathbb{Z}_{n}^2),\]
where the group on the right is simply the stabiliser in $\mathrm{Aut}(F_{n}\times F_{n})/ (\mathbb{Z}_{n}^2 \times \mathbb{Z}_{n}^2)=W$ of the matrix $A$.

In fact we have the following:

%

\begin{proposition}\label{prop:automorphismgroup}Let $H$ be the subgroup of $W$ identified as above with the quotient $\mathrm{Aut}(S_{A}^{n})/\mathbb{Z}_{n}^{2}$.
Then $H\cong \{\mathrm{Id}\}$, $\mathbb{Z}_{2}$, $\mathbb{Z}_{3}$, $\mathbb{Z}_{6}$ or $\mathcal{S}_{3}$.
\end{proposition}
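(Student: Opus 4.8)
The plan is to exploit the identification, established just above the statement, of $H$ with the stabiliser $\mathrm{Stab}_W(A)$ of the matrix $A$ in $W=\mathcal{S}_3\wr\mathcal{S}_2$, and then to classify the possible stabilisers using the fixed-point computations carried out in Section~3. The key observation is that every non-identity element of $H$ fixes $A\in\mathfrak{F}_n$, and hence belongs to a conjugacy class of $W$ whose fixed-point set in $\mathfrak{F}_n$ is non-empty. By the calculations preceding Theorem~\ref{thm:numbersurfaces}, the classes $\mathcal{C}_2,\mathcal{C}_3,\mathcal{C}_4,\mathcal{C}_6,\mathcal{C}_8$ fix no matrix in $\mathfrak{F}_n$, so the non-identity elements of $H$ can lie only in $\mathcal{C}_5$, $\mathcal{C}_7$ or $\mathcal{C}_9$. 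Reading off Table~\ref{ta:conjugacy}, this means that $H$ contains no element of order $4$, that its involutions lie in $\mathcal{C}_7$, its order-$3$ elements in $\mathcal{C}_5$, and its order-$6$ elements in $\mathcal{C}_9$; in particular the orders of elements of $H$ are confined to $\{1,2,3,6\}$.

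Next I would analyse the factor-preserving part $H_0:=H\cap(\mathcal{S}_3\times\mathcal{S}_3)$. Its non-identity elements all lie in $\mathcal{C}_5$, that is, they are of the form $(\tau_1,\tau_2)$ with both $\tau_1,\tau_2$ of order $3$, so $H_0$ is a $3$-group. Since the Sylow $3$-subgroup of $\mathcal{S}_3\times\mathcal{S}_3$ is the normal subgroup $\langle\sigma_3\rangle\times\langle\sigma_3\rangle\cong\mathbb{Z}_3\times\mathbb{Z}_3$, we get $|H_0|\in\{1,3,9\}$. The case $|H_0|=9$ is impossible, because $H_0$ would then be the whole Sylow $3$-subgroup, which contains elements such as $(\sigma_3,\mathrm{Id})$ lying in the forbidden class $\mathcal{C}_4$. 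Hence $H_0\cong\{\mathrm{Id}\}$ or $H_0\cong\mathbb{Z}_3$ (the latter realised by $\langle(\sigma_3,\sigma_3)\rangle$, both of whose non-identity elements are indeed in $\mathcal{C}_5$).

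Then I would split according to whether $H$ contains a factor-interchanging element. Since the factor-preserving elements form the index-$2$ normal subgroup $\mathcal{S}_3\times\mathcal{S}_3$ of $W$, either $H=H_0$, giving $H\cong\{\mathrm{Id}\}$ or $\mathbb{Z}_3$, or else $[H:H_0]=2$ and $|H|=2|H_0|\in\{2,6\}$. When $|H|=2$ the generator is an involution of $\mathcal{C}_7$ and $H\cong\mathbb{Z}_2$; a generator in $\mathcal{C}_9$ is excluded since it has order $6$. When $|H|=6$ the only two groups of that order are $\mathbb{Z}_6$ and $\mathcal{S}_3$, and both are consistent with the class restrictions (in $\mathbb{Z}_6$ the involution lies in $\mathcal{C}_7$ and the order-$6$ elements in $\mathcal{C}_9$; in $\mathcal{S}_3$ the three involutions lie in $\mathcal{C}_7$ and the two order-$3$ elements in $\mathcal{C}_5$). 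This yields exactly the list $\{\mathrm{Id}\},\ \mathbb{Z}_2,\ \mathbb{Z}_3,\ \mathbb{Z}_6,\ \mathcal{S}_3$.

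The substantive part of the argument is the second step, where one bounds $|H_0|$ and must check that it is the conjugacy-class constraints, and not merely the element orders, that rule out the larger $3$-subgroup $\mathbb{Z}_3\times\mathbb{Z}_3$; everything else reduces to the elementary classification of groups of order at most $6$. To show the list is sharp one would, in addition, exhibit for suitable $A$ and $n$ a matrix realising each of the five groups as its stabiliser; here the constraint that $\mathcal{C}_9$ be non-empty precisely when $p\equiv 1\bmod 3$ (reflected in the value of $\Theta_4$) controls the appearance of $\mathbb{Z}_6$ and $\mathcal{S}_3$. However, the assertion as stated requires only the upper bound obtained above.
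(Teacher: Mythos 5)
Your argument is correct and follows essentially the same route as the paper: restrict the non-identity elements of $H$ to the classes $\mathcal{C}_5$, $\mathcal{C}_7$, $\mathcal{C}_9$ via the fixed-point computations, bound $|H\cap(\mathcal{S}_3\times\mathcal{S}_3)|$ by $3$, and conclude $|H|\in\{1,2,3,6\}$. The only (equally valid) variation is that you rule out $|H^0|=9$ by locating $H^0$ inside the Sylow $3$-subgroup $\mathbb{Z}_3\times\mathbb{Z}_3$ and noting it contains elements of $\mathcal{C}_4$, whereas the paper observes directly that the product of two non-inverse elements of $\mathcal{C}_5$ lands in $\mathcal{C}_4$.
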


\begin{proof}By the comment above, an element of $W$ will induce an automorphism of $S_{A}^{n}$, and therefore belong to $H$, if and only if it fixes the matrix $A$. By the proof of Theorem~\ref{thm:numbersurfaces}, the only elements with fixed points in $\mathfrak{F}_n$ are those in the conjugacy classes $\mathcal{C}_1$, $\mathcal{C}_5$, $\mathcal{C}_7$ and $\mathcal{C}_9$, so $H\subseteq \mathcal{C}_1\cup\mathcal{C}_5\cup\mathcal{C}_7\cup\mathcal{C}_9$.

Let $H^0=H\cap(\mathcal{S}_{3}\times\mathcal{S}_{3})$, so that $|H:H^0|\le 2$ and $H^0\subseteq \mathcal{C}_1\cup\mathcal{C}_5$. If $\gamma_1, \gamma_2\in\mathcal{C}_5$ and $\gamma_2\ne\gamma_1^{\pm 1}$ then $\gamma_1\gamma_2\in\mathcal{C}_4$, so $\gamma_1\gamma_2\not\in H$; it follows that $|H^0|=1$ or $3$, and hence $|H|=1, 2, 3$ or $6$. The only groups of these orders are those listed in the Proposition.
\end{proof}

Moreover, by the discussion prior to Theorem~\ref{thm:numbersurfaces} and the formula for $\Theta_{4}(p^{e})$, if $n$ is divisible by some prime $p\equiv -1\bmod{3}$ then no element of order 6 in $W$ fixes points of $\mathfrak{F}_{n}$. Similarly, the formula for $\Theta_{3}(p^{e})$, together with the fact that no element of $\mathcal{C}_2\cup\mathcal{C}_3$ has fixed points, ensures that no element of order 2 can belong to $H$ if 5 divides $n$. We therefore have the following:

\begin{corollary}Let $S_{A}^{n}$ be a Beauville surface. If a prime $p\equiv -1\bmod{3}$ divides $n$, then $H\cong \{\mathrm{Id}\}$, $\mathbb{Z}_{2}$, $\mathbb{Z}_{3}$ or $\mathcal{S}_{3}$. If 5 divides $n$ then $H\cong \{\mathrm{Id}\}$ or $\mathbb{Z}_{3}$.
\end{corollary}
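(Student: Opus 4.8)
The plan is to build on Proposition~\ref{prop:automorphismgroup}, which already pins $H$ down to one of $\{\mathrm{Id}\}$, $\mathbb{Z}_2$, $\mathbb{Z}_3$, $\mathbb{Z}_6$ or $\mathcal{S}_3$, and then to discard the forbidden possibilities by showing that under each divisibility hypothesis certain conjugacy classes of $W$ lose all their fixed points on $\mathfrak{F}_n$. The organising principle, recorded just before Theorem~\ref{thm:numbersurfaces}, is that an element of $W$ stabilises $A$ precisely when it stabilises the reduction of $A$ modulo every prime power $p_i^{e_i}$ dividing $n$. Consequently, if for a single prime power $p^e\mid n$ a given conjugacy class fixes no matrix of $\mathfrak{F}_{p^e}$, then that class contributes no element to $H$, the stabiliser of $A$ in $W$.

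For the first statement I would show that $H$ can contain no element of order $6$. From the proof of Theorem~\ref{thm:numbersurfaces} the only classes with fixed points on $\mathfrak{F}_n$ are $\mathcal{C}_1$, $\mathcal{C}_5$, $\mathcal{C}_7$ and $\mathcal{C}_9$, of orders $1$, $3$, $2$ and $6$; the remaining order-$6$ class $\mathcal{C}_6$ has none. Hence any element of order $6$ in $H$ would have to lie in $\mathcal{C}_9$. But $\Theta_4(p^e)=|\mathrm{Fix}(x_9)|=0$ whenever $p\equiv -1\bmod 3$, so if such a prime divides $n$ then no element of $\mathcal{C}_9$ fixes $A\bmod p^e$, and therefore none fixes $A$. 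Thus $H$ has no element of order $6$, ruling out $\mathbb{Z}_6$ and leaving $\{\mathrm{Id}\}$, $\mathbb{Z}_2$, $\mathbb{Z}_3$ or $\mathcal{S}_3$.

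For the second statement I would first observe that $5\equiv -1\bmod 3$, so the argument above already excludes $\mathbb{Z}_6$; it then suffices to exclude $\mathbb{Z}_2$ and $\mathcal{S}_3$, each of which contains an involution. The involutions of $W$ are distributed among $\mathcal{C}_2$, $\mathcal{C}_3$ and $\mathcal{C}_7$, of which $\mathcal{C}_2$ and $\mathcal{C}_3$ fix nothing in $\mathfrak{F}_n$, while $\Theta_3(p^e)=|\mathrm{Fix}(x_7)|=p^{2e}(1-3/p)(1-5/p)$ vanishes at $p=5$. Hence when $5\mid n$ no element of $\mathcal{C}_7$ fixes $A\bmod 5^e$, so $H$ contains no involution at all. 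The only groups in the list of Proposition~\ref{prop:automorphismgroup} free of involutions are $\{\mathrm{Id}\}$ and $\mathbb{Z}_3$, which gives the claim.

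I expect no genuine obstacle: the corollary is a bookkeeping reading of the fixed-point counts $\Theta_3$ and $\Theta_4$ already established for Theorem~\ref{thm:numbersurfaces}. The only point needing care is the reduction principle---that the vanishing of fixed points at one prime power is enough to remove the offending elements from the global stabiliser $H$---but this is precisely the multiplicativity remark made before Theorem~\ref{thm:numbersurfaces}, and no new computation is required.
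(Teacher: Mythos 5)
Your argument is correct and follows essentially the same route as the paper: it invokes the prime-power reduction remark made before Theorem~\ref{thm:numbersurfaces}, uses the vanishing of $\Theta_4(p^e)$ for $p\equiv -1\bmod 3$ to exclude order-$6$ elements, and uses the vanishing of $\Theta_3(5^e)$ together with the fixed-point-freeness of $\mathcal{C}_2\cup\mathcal{C}_3$ to exclude involutions when $5\mid n$. The only cosmetic difference is that you separately note $5\equiv -1\bmod 3$ to rule out $\mathbb{Z}_6$ in the second case, which is already implied by the absence of involutions.
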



For instance, in Beauville's original example~\cite{Bea}, $\mathrm{Aut}(S_{A}^{5})$ is a semidirect product of ${\mathbb Z}_5^2$ by $H\cong{\mathbb Z}_3$.

This analysis of automorphism groups has been extended to more general Beauville surfaces in~\cite{Jon}.

\begin{acknowledgement}The first and third author authors were partially supported by MEC grant MTM2009-11848. The third author was partially supported by an FPU grant of the MICINN.
\end{acknowledgement}


\

\

\footnotesize
\noindent {\sc G. Gonz\'{a}lez-Diez:} \\ Departamento de Matem\'aticas, Universidad Aut\'onoma de Madrid, 28049, Madrid, Spain. \\ email: {\tt gabino.gonzalez@uam.es}

\

\noindent {\sc G. A. Jones:} \\ School of Mathematics, University of Southampton, Southampton SO17 1BJ, U.K. \\ email: {\tt G.A.Jones@maths.soton.ac.uk}

\

\noindent {\sc D. Torres-Teigell:} \\Departamento de Matem\'aticas, Universidad Aut\'onoma de Madrid, 28049, Madrid, Spain. \\ email: {\tt david.torres@uam.es}

\end{document}